\theoremstyle{plain}
\newtheorem*{theorem*}{Theorem}
\theoremstyle{notation}
\numberwithin{equation}{section}
\theoremstyle{plain}
\newtheorem{definition}{Definition}[section]
\newtheorem{theorem}[equation]{Theorem}
\newtheorem{Conjecture}[equation]{Conjecture}
\newtheorem{Question}[equation]{Question}
\newtheorem{remark}[equation]{Remark}
\newtheorem{corollary}[equation]{Corollary}
\newtheorem{lemma}[equation]{Lemma}
\newtheorem{proposition}[equation]{Proposition} 
\providecommand{\keywords}[1]
{
  \small	
  \textbf{\textit{Keywords---}} #1
}
\title{Using Erdős's methods to study Yorke's problems}
\author{Eran Igra and Valerii Sopin}
\address{\newline
1. Shanghai Institute for Mathematics and Interdisciplinary Sciences (SIMIS), Shanghai 200433, China\newline
\newline
2. Research Institute of Intelligent Complex Systems, Fudan University, Shanghai 
200433, China \newline}
\email{eranigra@simis.cn}
\email{VvS@myself.com, vsopin@simis.cn}
\begin{document}

\begin{abstract}
In this paper, we study the possible bifurcations of periodic orbits by analyzing them as graphs. In detail, we construct a collection of graphs which are an idealized versions of bifurcation diagrams and color them using the Mallet-Yorke Orbit Index (and the Lefschetz Fixed Point Theorem). The aforementioned allows to study the genericity of routes to chaos, as well as to gain insight into their possible complexity. In particular, our results can be interpreted as saying that there is no upper bound on the possible complexity of routes to chaos in high dimensional systems.
\end{abstract}

\maketitle
\keywords{\textbf{Keywords} - bifurcation theory, periodic orbits, orbit index, generic properties, trees, complexity, classification}
\section{Introduction}

Let $S$ be a closed, smooth, orientable manifold of dimension $d>1$ and let $f_t:S\to S$, $t\in[0,1]$, be an orientation-preserving smooth one-parameter family. Moreover, assume the dynamics of $f_0$ is "simple", while the dynamics of $f_1$ is "complex". For example, assume that a.e. initial condition on $S$ is attracted to a stable periodic orbit for $f_0$, while $f_1$ has a Smale Horseshoe dynamics, or alternatively, infinitely many sinks per the Newhouse Phenomenon (see \cite{New}). In this paper we ask the following question - can we tell which bifurcations we should expect for a generic choice of such a one-parameter family? In other words, which route to chaos involving periodic orbits should we expect? The motivation to study this question is clear. Ideally, such one-parameter families correspond to first-return maps to smooth flows - as such, understanding and being able to predict with a certain probability how their dynamics progress from order into chaos should give us tools to answer the same problem for one-parameter families of vector fields.

One natural approach to this question would be to try and define a measure on the Banach manifold all $C^\infty$ maps $F:S\times(0,1)\to S$, where $f_t(s)=F(s,t)$, $s\in S, t\in(0,1)$. That approach, however, has its limitations: as proven in \cite{MT}, if one were to construct a natural measure $m$ on the Banach Manifold $C^\infty(S\times(0,1),S)$ which is analogous to the Lebesgue measure, $m$ would only take the values $0$ and $\infty$. As proven in \cite{MT}, the measure $m$ is in many ways as best as can be, which implies it does not appear likely that one can assign probabilities to different types of routes to chaos. 

\begin{figure}[h]
\centering
\begin{overpic}[width=0.3\textwidth]{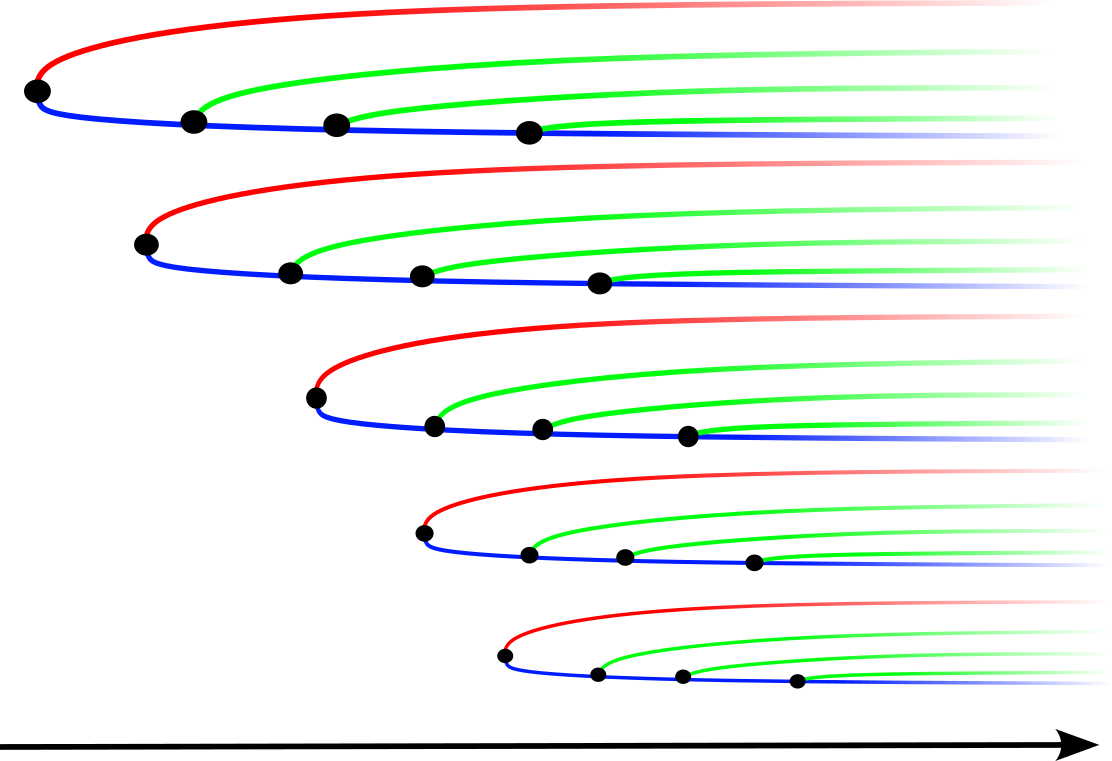}
\put(1030,0){$t$}
\end{overpic}
\caption{\textit{A (partial) illustration of a period-doubling route to chaos as a collection infinitely many trees, whose edges are colored w.r.t. the Mallet-Yorke Index \cite{KY4}, where $t$ denotes a parameter, while the dots denote bifurcation orbits.}}
\label{cascades}
\end{figure}

It is exactly this gap we study in this paper, where instead of a measurable approach we combine topological and graph theoretical approaches to tackle this problem. Inspired by the Mallet-Yorke Index originally introduced in \cite{PY} and developed in \cite{PY2}, \cite{PY3}, \cite{KE}, \cite{EY}, \cite{ky4}, \cite{KY4}, \cite{B1}, \cite{B2}, \cite{B3} and \cite{PY4} (among many others), we study graphs inspired by certain bifurcation laws, with which we attempt to give an explanation. In detail, we use the Mallet-Yorke Index to color a certain families of graphs, which give an inkling to the answer. The proposed answer correlates well with both theoretical and numerical studies.

To be precise, given any dimension $d>1$ and $k>0$ we construct $G^A_{k,d}$, a collection of colored acyclic graphs - i.e., colored trees - which are colored based on laws derived from the Mallet-Yorke Index. Intuitively, the colored graphs in $G^A_{k,d}$ should be interpreted as possible routes to chaos of periodic orbits for the one-parameter family $f_t:S\to S$, $t\in[0,1]$, under the restrictions that a given periodic orbit is allowed to split in a bifurcation into at most $k$ distinct orbits, and, that once a periodic orbit appears at some $t\in(0,1)$ it persists as $t\to1$ (see the discussion at the beginning of Section \ref{trees}). With that idea in mind, we consider the relative part of $G^A_{k,d}$ within $G^A_{k+1, d}$, which we denote by $\frac{|G^A_{k+1, d}|}{|G^A_{k,d}|}$ and treat as a direct limit. As we will prove in Theorem \ref{treeth}, for all $d>3,k>1$ we have $\frac{|G^A_{k+1,d}|}{|G^A_{k,d}|} = \frac{|G^A_{k+1,d}|}{|G^A_{2,d}|} = \frac{|G^A_{k,d}|}{|G^A_{1,d}|} = \infty$.

This result should be understood as follows: as the collection $\{G^A_{k,d}\}_{k}$ is interpreted as a collection of bifurcation diagrams describing routes to chaos of period multiplying bifurcations, then when the dimension of $S$ is $4$ and higher, there is no "most likely" route to chaos. In other words, our results suggest that in high dimension there are many more routes to chaos out there involving period multiplying bifurcations, and not a single one of them "dominates" the landscape. At this point, we remark that even though it is not at all clear whether the set $G^A_{k,d}$ truly represents all possible routes to chaos for the one-parameter family $f_t:S\to S$, our findings correlate well with several numerical observations and theoretical studies. For example, in \cite{Gallas} it was numerically observed that in general, one should not assume the classic period-doubling route to chaos to be the only generic case. Similarly, in \cite{Tur} it was shown how the introduction to mixed convection in a flow could alter the type of route to chaos. In a more theoretical direction, in \cite{GuW} it was concluded that there exist routes to chaos whose dynamics can be much more complex than the regular period-doubling route. Finally, as noted in \cite{AS}, as far as one can talk of a "likely" route to chaos, in higher-dimensional systems these routes to chaos appear to be based on Neimark-Sacker bifurcations, as opposed to the usual period-doubling one. 

Here, one may ask: do similar results also hold in lower dimensions, i.e., when the dimension of $S$ is either $2$ or $3$? Unfortunately, in such low dimensions the properties of the Mallet-Yorke Index alone, with which we define $G^A_{k,d}$, do not allow a clear answer. Nevertheless, as we will prove (drawing on the ideas of \cite{EY}), in these cases the size of $\frac{|G^A_{k+1, d}|}{|G^A_{k,d}|}$ strongly depends on how much $G^A_{k,d-1}$ fills up $G^A_{k,d}$ (see Corollary \ref{dim3} for the precise formulation). Specifically, this should be interpreted as follows:
\begin{itemize}
    \item When $d=3$, the answer depends on how many smooth one-parameter families of three-dimensional diffeomorphism have essentially two-dimensional behavior.
    \item When $d=2$, the answer depends on how many smooth one-parameter families of surface diffeomorphisms have "essentially" one-dimensional behavior.
\end{itemize}

This paper is organized as follows. In Section \ref{laws} we remind several basic facts about the Mallet-Yorke Index, use it to translate families of isotopies into colored graphs (see Propositions \ref{rules2}-\ref{rules4}) and rigorously define bifurcation diagrams (see Definition \ref{bifurcationgraph}). In Subsection \ref{trees} we study the relative size of $G^A_{k,d}$ in $G^A_{r, d}$, $k<r$, proving Theorem \ref{treeth} and discussing what happens in dimensions $2$ and $3$ in Corollary \ref{dim3}. 

\subsection*{Acknowledgments}
The authors would like to thank Asad Ullah for the helpful discussions. The first author would also like to thank Marian Gidea for introducing him to \cite{LM}, which inspired Corollary \ref{dim3}.

\section{Bifurcation Laws for Diffeomorphisms}
\label{laws}
In this Section we introduce the basic notations and definitions that will be used in the proof of Theorem \ref{treeth} in the next Section. This Section is organized as follows: we begin by recalling the terminology of the Mallet-Yorke Index as well as some of its basic facts, after which we study how the Mallet-Yorke Index correlates with the dimension of $S$ (Proposition \ref{rules2} - Proposition \ref{rules4}). Following that, we discuss the reduction of bifurcation diagrams to graphs, thus setting the stage for Theorem \ref{treeth}, which we will prove in the next Section.

To begin with, from now on unless otherwise stated, $S$ will always denote a closed orientable manifold of dimension $d>1$. Now, consider an orientation-preserving diffeomorphism $f:S\to S$. Let $x\in S$ be a periodic orbit of period $n$, where by "period" we always refer to the minimal period. Now, denote by $D_{f^n}(x)$ the differential of $f^n$ at some point of the periodic orbit $x$. Let $\sigma^+$ and $\sigma^-$ denote the number of eigenvalues of $D_{f^n}(x)$ at $(1,\infty)$ and $(-\infty,-1)$, respectively, and assume $\sigma^++\sigma^-=n$ (it is easy to see these quantities are independent of the choice of a point in the periodic orbit $x$). We define $\phi(x)$, the Mallet-Yorke Index of $x$, as follows:  $$
\phi(x)=\begin{cases}
			(-1)^{\sigma^+}, & \text{if $\sigma^-$ is even}\\
            0, & \text{if $\sigma^-$ is odd}
		 \end{cases}
$$
\begin{figure}[h]
\centering
\begin{overpic}[width=0.4\textwidth]{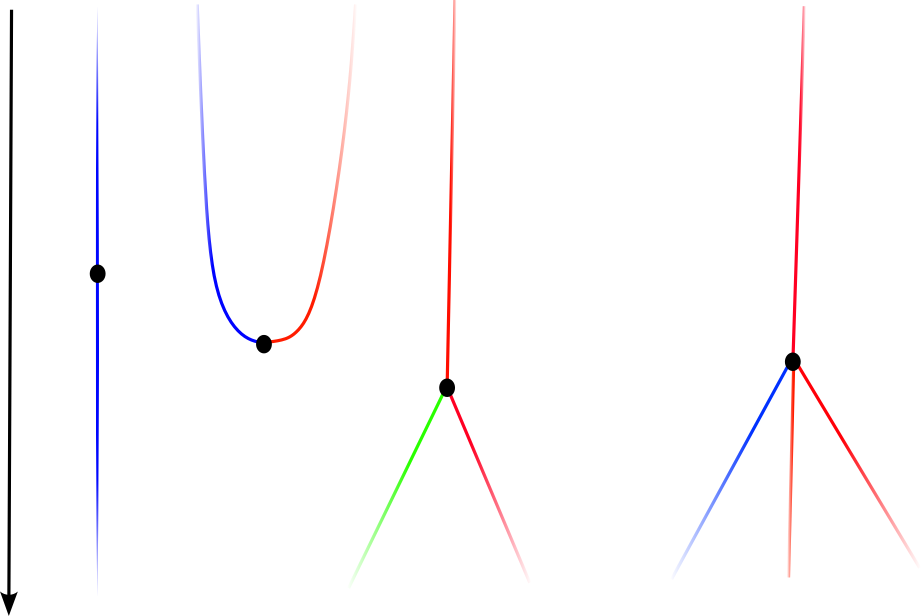}
\put(0,-40){$t$}
\end{overpic}
\caption{\textit{From left to right - a type $0$ orbit, type $1$, type $2$, and type $m\geq3$, represented by the black dot. The Mallet-Yorke Index is red when it is $-1$, blue when $1$ and green when $0$. }}
\label{fig1}
\end{figure}

This index can be used to study bifurcations. To state these results, we first define \textbf{type $m$} orbits for $m\geq1$. Let $x$ be a periodic orbit for $f_{t_0}$ of the minimal period $n$, where $f_t:S\to S$ is a $C^1$ isotopy of orientation-preserving diffeomorphisms, where $t\in[0,1]$, and $t_0$ is strictly interior to $[0,1]$ (from now on, we will always refer to such isotopies as \textbf{$C^1$ one-parameter families}). We now classify the types of periodic orbits we consider in this paper (see the illustrations in Figure \ref{fig1} and Figure \ref{decomposition}):

\begin{enumerate}
    \item $x$ is \textbf{type $0$} if $D_{f^n_{t_0}}(x)$ has no eigenvalues which are roots of unity. In this case, $x$ persists when $f_{t_0}$ is varied to $f_t$, $t\in(t_0-\epsilon,t_0+\epsilon)$, without changing its minimal period (for some $\epsilon>0$).
    \item $x$ is \textbf{type $1$} if $1$ is the only eigenvalue of $D_{f_0^n(x)}$ which is a root of unity (of multiplicity 1) and $x$ is the collision point for two periodic orbits $x',x''$ for $f_t$, which persist for $t\in(t_0-\epsilon,t_0)$, then collide at $x$ and disappear as $t$ is varied into $t>t_0$ (again, for some $\epsilon>0$).
    \item $x$ is \textbf{type $2$} if $-1$ is the only eigenvalue of $D_{f^n_{t_0}}(x)$ which is a root of unity (of multiplicity 1) and, when $t$ crosses into $(t_0,t_0+\epsilon)$, $x$ splits into two periodic orbits for $f_t$ - $x'$ and $x''$, s.t. $x'$ has period $n$ and $x''$ has period $2n$. Moreover, as we vary $t$ in the interval $(0,-\epsilon)$, $x$ persists as a periodic orbit for $f_t$ without changing its periodic orbit (again, for some $\epsilon>0$).
    \item  $x$ is \textbf{type $m$}, $m\geq3$, if the following is true:
    
    \begin{itemize}
        \item There exists some $k$, relatively prime to $m$, s.t. $D_{f_0^n(x)}$ has precisely two eigenvalues which are roots of unity, $e^{\pm 2\pi i\frac{k}{m}}$, each of multiplicity 1. 
        \item As we vary $f_{t_0}$ to $f_t$, when $t$ crosses into $t\in(t_0,t_0+\epsilon)$ the orbit $x$ splits into three orbits, $x',x''$ and $x'''$, s.t. $x''$ and $x'''$ have period $mn$. Moreover, the periodic orbit $x'$ retains the original period, i.e., n (again, for some $\epsilon>0$).
    \end{itemize}
    \item $x$ is a \textbf{$n$-junction bifurcation orbit}, $n>3$, if $x$ is a bifurcation orbit which splits into $k$ distinct periodic orbits as $t$ crosses into $(t_0,t_0+\epsilon)$ (similarly, for some $\epsilon>0$). In addition, we require any $k$-junction orbit to be decomposable into either a finite sum of period-doubling bifurcations or a finite sum of type $m\geq3$ bifurcations, as illustrated in Figure \ref{decomposition}. Either way, we require the Mallet-Yorke Indices of such a decomposition to include at most two indices - i.e., we cannot have three orbits in an $n$-junction whose Mallet-Yorke Indices are $1,-1$ and $0$.
\end{enumerate}

\begin{remark}
    Type $m$ bifurcation orbits are sometimes called "period multiplying bifurcations", originally discussed in \cite{Mey} and \cite{mey2} in the context of area-preserving maps. In addition, see also \cite{mey3} for the specific case of period tripling.
\end{remark}
\begin{figure}[h]
\centering
\begin{overpic}[width=0.4\textwidth]{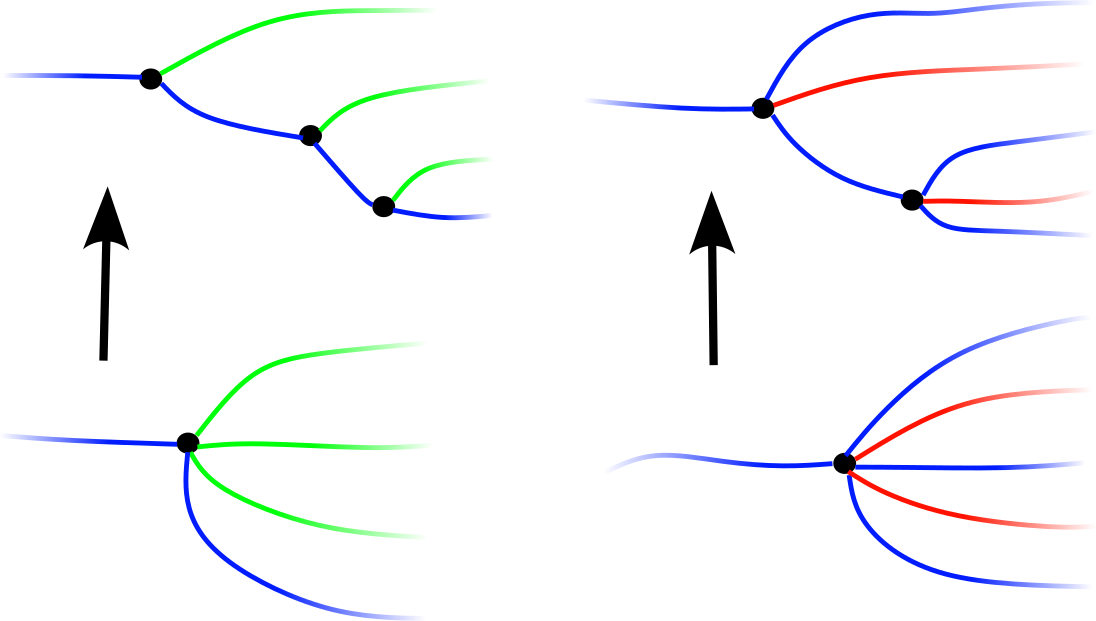}
\end{overpic}
\caption{\textit{On the left - a $4$-junction (on the bottom) which is decomposed into four period-doubling bifurcations. On the right - a $5$-junction (on the bottom) which is decomposed into two type $m$ bifurcations. The color blue represents Mallet-Yorke Index $1$, red represents the index $-1$ and green -- $0$.}}
\label{decomposition}
\end{figure}
Before moving on, we note that type $0$ periodic orbits can undergo a Neimark-Sacker bifurcation which does not change their Mallet-Yorke Index. This occurs, for example, when two complex conjugate eigenvalues $\lambda_1,\lambda_2$ cross $\{z||z|<1\}$ to $\{z||z|>1\}$ through $e^{\pm i\theta}$, where $\theta$ is irrational. We further note that type $0$ orbits are also isolated, i.e., if $x$ is a type $0$ periodic orbit of minimal period $n$ for some $f:S\to S$, there exists some neighborhood $N$ of $x$ s.t. $\{x\}=\{y|f^n(y)=y\}\cap N$. Moreover, by the Implicit Function Theorem it follows that if $f=f_\frac{1}{2}$ for some $C^1$ one-parameter family $f_t:S\to S$, $t\in(0,1)$, then there exists some $\epsilon>0$ s.t. if $\alpha=N\times(\frac{1}{2}-\epsilon,\frac{1}{2}+\epsilon)\cap \{(y,t)|f^n_t(y)=y\}$, then $\alpha$ is a collection of $n$ curves, each homeomorphic to $(0,1)$, passing through every point in $x$. As a consequence, if $\{x_j\}_j$ are a sequence of periodic orbits for $f$ s.t. $x_j\to x$, then the minimal period of $x_j$ must blow up as $j\to\infty$.

Throughout this paper, we will refer to periodic orbits of type $m\geq1$ as the \textbf{admissible bifurcations of periodic orbits} or just the \textbf{admissible bifurcations} in short. With that terminology, we now recall the connection between the types of periodic orbits, the admissible bifurcations and the Mallet-Yorke Index, given by the following result, proven in \cite{PY3}:
\begin{theorem}
\label{indexinvariance} Let $f_t:S\to S$ be a $C^1$ one-parameter family, $t\in[0,1]$, and let $t_0\in[0,1]$ be some interior point. Assume $x$ is a type $0$ periodic orbit of minimal period $n$ for $f_t$, which persists in $[0,t_0)$ and splits into $k$ distinct type $0$ periodic orbits for $f_t$, as $t$ crosses into $(t_0,1]$ (in particular, $t_0$ is bifurcation parameter for $x$). Let us denote by $x_1,...,x_k$ these new periodic orbits - then, we have the following invariance rule:
\begin{equation*}
    \phi(x)=\sum_{i=1}^k\phi(x_i)
\end{equation*}
In other words, the Mallet-Yorke Index is a bifurcation invariant (see the illustration in Figure \ref{fig1}).
\end{theorem}

Having recalled the Mallet-Yorke Index and its bifurcation invariance properties, we are ready to begin. As stated in the Introduction, we are interested in measuring the size of different bifurcation scenarios within the space of all "admissible" bifurcations - which for us will be the type $1,2, m$ and $n$-junctions orbits (where $m,n\geq3$). We now use this idea to introduce the following families of isotopies:

\begin{itemize}
    \item The set $F_1(S)$ is defined as the collection of $C^1$ one-parameter families in $S\times(0,1)$ s.t. every periodic orbit is either type $0$, type $1$ or type $2$ (i.e., every periodic orbit is allowed only to undergo saddle-node, Neimark Sacker or period-doubling bifurcations). In particular, the Mallet-Yorke Index is defined as either $0,1,-1$ for all type $0$ orbits - moreover, the Mallet-Yorke Index of a periodic orbit can change only at bifurcation orbits.
    \item The set $F_2(S)$ is defined as the collection of $C^1$ one-parameter families in $S\times(0,1)$ whose type $0$ periodic orbits are only allowed to undergo saddle-node, period-doubling and type $m\geq3$ bifurcations. In particular, we will assume that if a Neimark-Sacker bifurcation occurs where new periodic orbits are born, it has to be at a type $m\geq3$ bifurcation (note that families in $F_2(S)$ are technically allowed to have non-admssible periodic orbits, even if we ignore them). 
    \item The set $F_k(S)$, $k\geq3$, is defined as the collection of $C^1$ one-parameter families in $S\times(0,1)$ whose type $0$ periodic orbits bifurcate either by saddle-node, period-doubling, type $m\geq3$ or $r$-junction bifurcations, where $3<r\leq n+1$. Again, we assume that if a Neimark-Sacker bifurcation occurs in which new periodic orbits are born, it has to be at either type $m\geq3$ bifurcation or an $n$ junction, for some $k+1\geq n>3$. Similarly, families in $F_k(S)$ are also allowed to have periodic orbits that are not admissible.
\end{itemize}

By definition, for all $1\leq k<j$ we have $F_k(S)\subseteq F_j(S)$. As $F_1(S)$ is $C^1$-generic, it follows that each $F_k(S)$ is $C^1$-generic in the collection of $C^\infty$-isotopies $F:S\times(0,1)\to S$ - for the proof of genericity of $F_1(S)$, see Proposition 2.1.1 in \cite{EY} (for further discussion of the $C^r$, $r\geq1$ genericity of $F_1(S)$ see the Appendix of \cite{PY2}, Section 2 of \cite{KY2}, and the references therein). Before moving on, we make two remarks - the first one is that it is not known whether $F_k(S)$ is non-empty for an arbitrary $k>2$, which may lead one to wonder why we do not restrict ourselves to $F_1(S)$ and $F_2(S)$. The reason we incorporate these families into this paper is to account for possible degenerate bifurcations which are currently unknown -  as observed by several studies, it is unlikely one could ever classify all possible ways in which dynamics turn from simple to complex (see, for example, \cite{facet}). That being said, by the results of \cite{Quad} we know there are indications of $F_3(S)$ being non-empty, at least when $S$ is even-dimensional. The second remark is that one might wonder if it is possible to relax the assumptions and also consider the bifurcations of orbits that are not type $0$. Unfortunately, that is probably impossible. To illustrate, let $f_t:S\to S$, $t\in[0,1]$ denote a $C^1$ one-parameter family over a closed surface $S$ and assume $f_{\frac{1}{2}}$ has a fixed point $x$ which is \textbf{not} a type $m\geq3$ bifurcation orbit, yet $D_{f_{\frac{1}{2}}}(x)$ has a pair of imaginary eigenvalues that are roots of unity. The said fixed point would persist as we vary $t$, but could also lie at the center of a disc, which is rotated by some rational number. The bifurcations of the one-parameter family around such fixed points could probably be much more complex than what can be described using our methods. As such, we avoid this scenario by only considering type $0$ orbits and their bifurcations. 

Moving on, we will study the question of how much $F_k(S)$ fills up $F_j(S)$, $0<k<j$, by thinking of the bifurcation diagrams of type $0$ orbits  as a collection of graphs. To this end, we will first study the dependence of the admissible bifurcations on $d$, the dimension of $S$ - more precisely, we will consider how $d$ and the Mallet-Yorke Index constrain the bifurcations a type $0$ periodic orbit can undergo. To begin, we first introduce the following notation: consider a $C^1$ one-parameter family $f_t:S\to S$, $t\in(0,1)$ with a periodic orbit $x$ of type $0$ whose Mallet-Yorke Index is $i$. Assume, for simplicity, that $x$ persists as a type $0$ orbit for $f_t,t\in(0,t_1)$ and that at $t_1<1$ it bifurcates and splits into $k\in\mathbb{N}$ distinct type $0$ orbits of Mallet-Yorke Indices $i_1,...,i_k$ for $f_t$, $t\in(t_1,1)$. For simplicity, we denote this transition by $i\to (i_1,...,i_k)$. By Theorem \ref{indexinvariance}, we already know $i=\sum_{j=1}^k i_j$. So, we prove:
\begin{lemma}
    \label{notype20} Assume $x\in S$ is type $0$ periodic orbit for $f_{t_0}$, $t_0\in(0,1)$, and $\phi(x)=0$. Then, for all dimensions $d$, $x$ cannot split in a period-doubling bifurcation of the type $0\to(0,0)$ as we vary $t$ towards $1$.
\end{lemma}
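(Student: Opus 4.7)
The plan is to track how the spectrum of the linearization $D_{f_t^n}$ evolves along the continuation of $x$ across the bifurcation, and then exploit the parity condition built into the definition $\phi=0$.

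First, I would unpack what a type $2$ bifurcation forces on the spectrum. By the definition recalled before the lemma, $D_{f_{t_0}^n}(x)$ has $-1$ as a simple eigenvalue and no other eigenvalues which are roots of unity, while all the remaining eigenvalues lie off the unit circle. By smoothness of $f_t$ and the simplicity of the eigenvalue $-1$, on a small interval around $t_0$ there is a smooth real branch $\lambda(t)$ of eigenvalues of $D_{f_t^n}$, evaluated along the smooth continuation of $x$, with $\lambda(t_0)=-1$ and all other eigenvalue branches bounded away from $(-\infty,-1]\cup[1,\infty)$ by continuity. The genericity built into a type $2$ bifurcation (equivalently, the classical period-doubling normal form used in \cite{PY3}) ensures $\lambda'(t_0)\neq 0$, so $\lambda$ crosses $-1$ transversally at $t=t_0$.

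Second, I would compare $\sigma^{-}$ on the two sides of $t_0$. Let $x$ denote the orbit on the one-orbit side and $x'$ the period-$n$ branch of the split on the two-orbit side; since $x$ and $x'$ lie on a common smooth continuation, their eigenvalue branches agree. Because $\lambda(t)$ is the only eigenvalue entering or leaving the set $(-\infty,-1)$ as $t$ crosses $t_0$, the quantity $\sigma^{-}$ changes by exactly $\pm 1$ across $t_0$. Hence $\sigma^{-}(x)$ and $\sigma^{-}(x')$ have opposite parities.

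Third, I would convert this into the statement about Orbit Indices. By definition, $\phi=0$ is equivalent to $\sigma^{-}$ being odd. The opposite-parity conclusion above therefore implies $\phi(x)=0 \Rightarrow \phi(x')\in\{\pm 1\}$, so the branch $x'$ cannot have index $0$. This rules out any period-doubling transition of the form $0\to(0,\ast)$, and in particular $0\to(0,0)$, which is exactly what the lemma claims. The only subtle step is the transversal crossing of the eigenvalue branch through $-1$; I would cite this as the point where the type $2$ hypothesis is genuinely used, and otherwise the rest of the proof is essentially a parity bookkeeping.
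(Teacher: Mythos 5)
Your argument is correct, but it takes a genuinely different route from the paper's. You track the critical eigenvalue along the period-$n$ continuation through the bifurcation: since $-1$ is a simple eigenvalue and the only root of unity in the spectrum of $D_{f^n}$ at the bifurcation, the transversal crossing of the branch $\lambda(t)$ through $-1$ flips the parity of $\sigma^-$ along the non-doubled branch $x'$, so $\phi(x)=0$ forces $\phi(x')\neq 0$. The paper instead examines the doubled branch $x''$: the eigenvalues of $D_{f^{2n}}=(D_{f^n})^2$ are the squares of those of $D_{f^n}$, so real negative eigenvalues square to positive ones and only complex-conjugate pairs can square into $(-\infty,-1)$, whence $\sigma^-(x'')$ is even and $\phi(x'')\neq 0$. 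Either conclusion rules out $0\to(0,0)$, and via Theorem \ref{indexinvariance} each implies the other, since $\phi(x')+\phi(x'')=\phi(x)=0$. The trade-off is that your proof genuinely needs the transversality $\lambda'(t_0)\neq 0$, which you correctly flag as the point where the genericity of a type $2$ bifurcation is used, whereas the paper's squaring argument needs only the existence and continuity of the doubled branch and no transversality hypothesis. One small imprecision to fix: the remaining eigenvalue branches are not ``bounded away from $(-\infty,-1]\cup[1,\infty)$'' --- a type $2$ orbit may perfectly well carry additional eigenvalues deep inside those intervals --- they are merely bounded away from the points $\pm 1$, which is all your parity bookkeeping actually requires.
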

\begin{proof}
   Assume $x$ has a minimal period $n$ and recall that $\phi(x)=0$ when $\sigma^-$, the number of multipliers of the differential $D_{f_{t_0}}(x)$ in $(-\infty,-1)$ is odd. Now, assume $x$ undergoes a period-doubling bifurcation at some $t_1>t_0$, where $x$ splits to $x'$ and $x''$ s.t. the period is doubled at, say, $x''$. We will prove the lemma by counting the eigenvalues of $D_{f^{2n}_t}(x'')$, $t>t_1$, in $(-\infty,-1)$ and showing it is even - this will imply the Mallet-Yorke Index of $x''$ is non-zero, thus by Theorem \ref{indexinvariance} the Mallet-Yorke Index of $x$ and $x'$ cannot both be $0$.
  
Again, assume $x$ goes through the period-doubling bifurcation at some $t_1>t_0$. As $-1$ is the only root of unity of $D_{f^{n}_{t_1}}(x)$ and since every negative eigenvalue for $D_{f^n_{t_1}}(x)$ becomes positive for $D_{f^{2n}_{t_1}}(x)$, the differential $D_{f^{2n}_{t}}(x'')$, $t>t_1$ cannot inherit any negative eigenvalues from $D_{f^n_{t_1}}(x)$ (at least when $t$ is close to $t_1$). Moreover, since complex conjugate eigenvalues for $D_{f^n_{t_1}}(x)$ only come in pairs, it follows that $D_{f^{2n}_{t_1}}(x)$ has at most an even number of negative eigenvalues - consequentially, the same is true for $D_{f^{2n}_{t}}(x'')$. 
\end{proof}

\begin{figure}[h]
\centering
\begin{overpic}[width=0.5\textwidth]{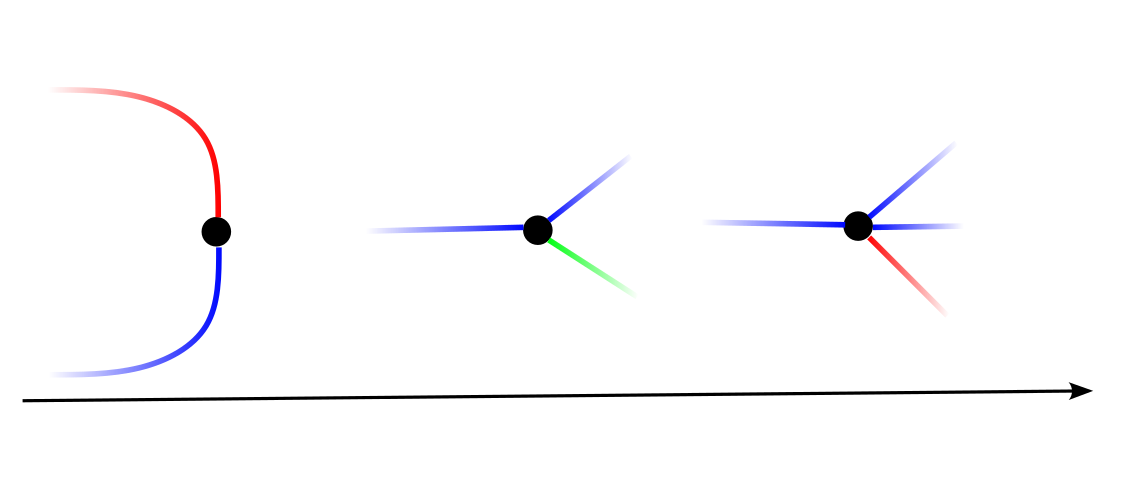}
\put(985,85){$t$}
\end{overpic}
\caption{\textit{The admissible saddle-node, period-doubling and type $m\geq3$ bifurcations when  $dim(S)=2$, sketched as diagrams. On red arcs the index is $-1$, on blue the index is $1$, while green arcs denote index $0$. The black dots denote the bifurcation orbit. }}
\label{fig3}
\end{figure}

Having proven Lemma \ref{notype20}, we are now ready to describe the admissible bifurcations when $dim(S)=d=2$. Namely, we prove the following:
\begin{proposition}
\label{rules2}    Consider a $C^1$ one-parameter family $f_t:S\to S$, $t\in(0,1)$ and let $x\in S$ be a periodic orbit of type $0$ for $f_{t_0}$, $t_0\in(0,1)$ of minimal period $n$. Then, when $dim(S)=2$, the saddle-node, period-doubling and type $m\geq3$ bifurcations for $x$ can only have one of the following forms (see Figure \ref{fig3}):
    \begin{itemize}
        \item  If $\phi(x)=-1$, $x$ cannot undergo any period-doubling or type $m\geq3$ bifurcations as we vary $t\in(0,1)$. In other words, $x$ can undergo type $2$ or type $m\geq3$ bifurcation only when $\phi(x)=1$.
        \item If $\phi(x)=0$, it cannot split into three periodic orbits in a type $m\geq3$ bifurcation as we vary $t$.
        \item Moreover, at type $m\geq3$, $x$ can only split by the rule $1\to(1,-1,1)$. Consequentially, the only type $m\geq3$ bifurcations allowed are $1\to(1,-1,1)$.
    \end{itemize}

As a consequence, any $k$-junctions, $k>3$ has one of the following types (see Figure \ref{rules2d}):
     \begin{itemize}
         \item If  an $kn$-junction is decomposed into a finite number of period-doubling bifurcations, then $k-1$ orbits have Mallet-Yorke Index $0$, and one has Mallet-Yorke Index $1$.
         \item If an $3j=n$-junction is decomposed into $j$ type $m\geq3$ bifurcations, then $2j$ orbits have Mallet-Yorke Index $1$, while $j$ have Mallet-Yorke Index $-1$.
     \end{itemize}
\end{proposition}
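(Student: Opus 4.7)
The main driver is dimensional: since $\dim S = 2$ and $f_t$ is orientation-preserving, every differential $D_{f^n_t}(x)$ is a $2\times 2$ matrix of strictly positive determinant, so its two eigenvalues either share a sign (when real) or form a complex conjugate pair. I would keep this positive-determinant constraint as the central tool throughout the case analysis that follows.

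I would first dispose of the type $m \geq 3$ claim. In dimension $2$, the pair of roots of unity $e^{\pm 2\pi i k/m}$ required at $t_0$ already exhausts the spectrum of $D_{f^n_{t_0}}(x)$, so for $t$ near $t_0$ both eigenvalues remain a complex conjugate pair off the unit circle, yielding $\sigma^\pm = 0$ and $\phi(x) = 1$; this immediately settles the type-$m$ parts of both the first and second bullets. The continuation orbit $x'$ of period $n$ inherits the same spectral structure, so $\phi(x') = 1$. For the period-$mn$ orbits $x'',x'''$, the identity $D_{f^{mn}_{t_0}}(x) = \bigl(D_{f^n_{t_0}}(x)\bigr)^m = I$ (using $\gcd(k,m) = 1$) forces the matrices $D_{f^{mn}_t}(x'')$, $D_{f^{mn}_t}(x''')$ to have two real eigenvalues close to $(1,1)$ for $t$ near $t_0$; the positive-determinant constraint then restricts $\phi(x''),\phi(x''') \in \{-1, 1\}$. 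Index invariance from Theorem \ref{indexinvariance} gives $\phi(x'') + \phi(x''') = 0$, so the only admissible split is $1 \to (1, -1, 1)$.

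For period-doubling I would argue by a short case analysis on the non-critical eigenvalue $\mu \ne \pm 1$ of $D_{f^n_{t_0}}(x)$. The positive determinant forces $\mu < 0$, so $\mu \in (-\infty, -1) \cup (-1, 0)$. Examining both regimes for $\mu$ together with the two possible sides of the critical eigenvalue $\lambda$ relative to $-1$ shows that the Orbit Index of the single-orbit side always lies in $\{0, 1\}$, which rules out $\phi(x) = -1$ and completes the first bullet for period-doubling.

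For the $n$-junction corollary I would invoke the \emph{at most two indices} clause from the definition of an $n$-junction. Of the period-doubling splits permitted in dimension $2$, only $1 \to (0, 1)$ uses at most two distinct indices; of the type $m \geq 3$ splits, only the $1 \to (1, -1, 1)$ of the previous step is available. A straightforward tree count then finishes the statement: iterating $1 \to (0, 1)$ starting from a root of index $1$ produces, after $n - 1$ splits, $n$ leaves of which $n - 1$ carry index $0$ and exactly one carries index $1$; iterating $1 \to (1, -1, 1)$ produces a ternary tree with $2k + 1$ leaves, $k + 1$ of index $1$ and $k$ of index $-1$, matching the $5$-junction shown in Figure \ref{decomposition}. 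I expect the main obstacle to lie not in any single calculation but in marshalling the case analysis cleanly: one must simultaneously track the spectral constraints forced by $\dim S = 2$ and orientation preservation, the direction of motion of the critical eigenvalue(s) in $t$, and the index-invariance identity, and then verify that the two-index clause selects exactly one admissible bifurcation rule in each decomposition class.
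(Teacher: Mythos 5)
Your argument is correct and follows essentially the same route as the paper's: the positive-determinant constraint on the $2\times2$ differential pins down the admissible spectra, the passage to the $mn$-th iterate shows the $m$-multiplied branches have $\sigma^-=0$ (the paper phrases this as the imaginary roots of unity becoming positive eigenvalues rather than via $D_{f^{mn}_{t_0}}(x)=I$, but the content is identical), and index invariance then forces $1\to(1,-1,1)$; for period-doubling the paper argues the contrapositive (index $-1$ forces a real saddle with eigenvalues in $(0,1)$ and $(1,\infty)$, so no eigenvalue can reach $-1$) while you argue forward (at the bifurcation both eigenvalues are negative, so $\phi\in\{0,1\}$), which is equivalent. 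One caveat on the junction count: for the type-$m$ case your cascade of $k$ bifurcations yields $n=2k+1$ leaves with $k+1$ of index $1$ and $k$ of index $-1$, which is what Figure \ref{decomposition} actually depicts for the $5$-junction, but it is not literally the proposition's claim of ``$2k$ orbits of index $1$, $k$ of index $-1$, $3k=n$''; the paper's proof declares this step ``immediate'' and never derives those numbers, so you should either reconcile your count with the stated formula or note explicitly that the cascade decomposition gives $n=2k+1$ rather than $n=3k$. Also, when you assert that $1\to(0,1)$ is the only period-doubling usable in a junction, it is cleaner to say that the alternative $0\to(1,-1)$ already involves the three indices $0,1,-1$ once the parent is counted, and is therefore excluded by the two-index clause in the definition of an $n$-junction, rather than claiming it is not a permitted period-doubling in dimension $2$ (the proposition's bullets do not rule it out as an isolated bifurcation).
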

\begin{proof}
    Consider the differential of $D_{f^n_{t_0}}(x)$ - if $\phi(x)=-1$, by the orientation-preserving assumption on the one--parameter family $f_t:S\to S$, $t\in[0,1]$, we conclude that the differential has one eigenvalue in $(0,1)$ and another in $(1,\infty)$. This proves that periodic orbits of Mallet-Yorke Index $-1$ cannot have their period-doubled or $m$-multiplied,  i.e., if $\phi(x)=-1$, $x$ cannot undergo period-doubling or type $m\geq3$ bifurcation, as the first requires two negative eigenvalues while the second requires two complex conjugate eigenvalues. Similarly, whenever $\phi(x)=0$, the differential must have two negative eigenvalues, hence periodic orbits of Mallet-Yorke Index $0$ cannot have their period $m$-multiplied in a type $m\geq3$ bifurcation.

\begin{figure}[h]
\centering
\begin{overpic}[width=0.35\textwidth]{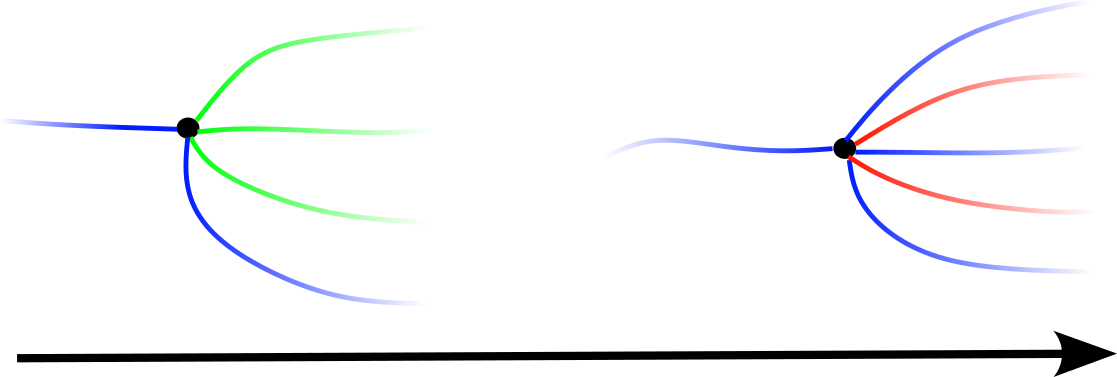}
\put(1030,10){$t$}
\end{overpic}
\caption{\textit{The admissible $n$-junctions in dimension $2$ for $n=4,5$. The colors denote indices as in Figure \ref{fig3}. }}
\label{rules2d}
\end{figure}

Therefore, it remains to prove that the only admissible type $m\geq3$ bifurcations in dimension $2$ are $1\to(-1,1,1)$. To do so, note that by the above arguments and by Theorem \ref{indexinvariance}, we can only have $1\to(-1,1,1)$ or $1\to(0,1,0)$ - where the branch of orbits, along which the period is preserved, continues with the original Mallet-Yorke Index, i.e., $1$ (see also Section 1 in \cite{Mey}). Taking the $mn$-th iterate (i.e., the least common iterate), it is easy to see that at the type $m$ bifurcations the two imaginary eigenvalues can only change into positive eigenvalues, as they are $m$-th roots of unity. This proves that the two bifurcating branches of orbits whose period is $m$-multiplied both have real, positive eigenvalues, hence by Theorem \ref{indexinvariance} the only possibility is that one has Mallet-Yorke Index $1$ and the second $-1$. This proves that the only admissible type $m\geq3$ bifurcation is $1\to(1,-1,1)$. It is easy to see that the implications of this classification of type $m$ and period-doubling bifurcations for $n$-junctions are immediate. 
    \end{proof}
\begin{remark}
    The fact that when $\phi(x)=-1$, $x$ cannot undergo a period-doubling bifurcation was originally proven in \cite{KE} for three-dimensional flows, using Knot Theory. That being said, as shown in \cite{KE}, the same bifurcation most certainly can happen for four dimensional flows, where it is referred to as a "Noose Bifurcation".
\end{remark}

\begin{figure}[h]
\centering
\begin{overpic}[width=0.4\textwidth]{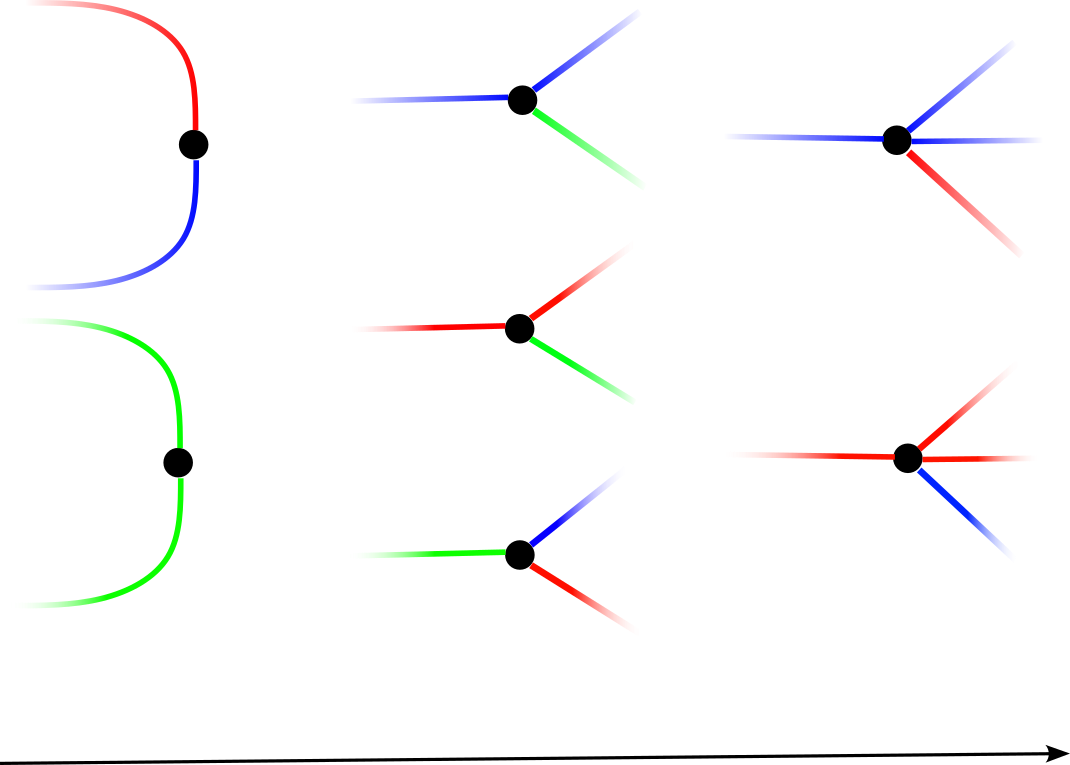}
\put(1030,0){$t$}
\end{overpic}
\caption{\textit{The admissible saddle-node, period-doubling and type $m\geq3$ when $dim(S)=3$, sketched as diagrams. On red arcs the index is $-1$, on blue the index is $1$, while green arcs denote index $0$. The black dots denote the bifurcation orbit.}}
\label{fig4}
\end{figure}

Having studied the case when $dim(S)=2$, we now move on and study the admissible bifurcation laws when $dim(S)=3$. We first remark that  every admissible bifurcation that occurs in dimension $2$ can also occur in dimension $3$ - all we need to do is add one eigenvalue inside the interval $(0,1)$ to the differential, which would not change the Mallet-Yorke Index along the diagram. That being said, when one moves from dimension $2$ to $3$ there are more admissible bifurcations possible in $F_2(S)$, as we will prove below:

\begin{proposition}
\label{rules3}    Assume $dim(S)=3$ and let $f_t:S\to S$, $t\in(0,1)$ be a $C^1$ one-parameter family. Then, the possible saddle-node, period-doubling and type $m\geq3$ bifurcations for type $0$ orbits can only have the following forms  (see Figure \ref{fig4}):
\begin{enumerate}
    \item A saddle-node bifurcation, where two periodic orbits of Mallet-Yorke Index $0$ collide with one another and disappear. 
    \item A period-doubling bifurcation of either one of the forms - $0\to(-1,1)$, $-1\to(-1,0)$.
    \item Type $m\geq3$ bifurcations of the form $-1\to(-1,1,-1)$ or $1\to(1,-1,1)$.
\end{enumerate}
In other words, the possible bifurcation laws when $S$ is three-dimensional are precisely the admissible rules from the two-dimensional case in Proposition \ref{rules2} plus the additional $5$ forms of bifurcations given above. Consequentially, the possible $n>3$-junctions are as in Figure \ref{rules3d} - or every $n$-junction takes one of the following forms:
\begin{itemize}
    \item A periodic orbit of Mallet-Yorke Index $\pm1$ which splits into $n-1$ distinct periodic orbits of Mallet-Yorke Index $0$, and one periodic orbit of Mallet-Yorke Index $\pm1$.
    \item A periodic orbit of Mallet-Yorke Index $\pm1$ which splits either into $2j$ periodic orbits of Mallet-Yorke Index $-1$ and $j$ periodic orbits of Mallet-Yorke Index $1$, or the opposite.
\end{itemize}
\end{proposition}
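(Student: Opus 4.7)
The plan is to enumerate the possible bifurcation laws in dimension $3$ by combining three constraints on the three eigenvalues of $D_{f_t^n}(x)$: the prescribed structure at the bifurcation moment (one root of unity of multiplicity one), orientation-preservation of $f_t$ (positive product of eigenvalues), and the Orbit Index invariance of Theorem \ref{indexinvariance}. As in the proof of Proposition \ref{rules2}, one computes $\sigma^+$ and $\sigma^-$ on each branch meeting the bifurcation for every admissible configuration of the two non-unit eigenvalues, and then reads off the surviving index transitions.

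For a saddle-node at $x$, one eigenvalue equals $+1$ and the remaining pair $\{\alpha,\beta\}$ satisfies $\alpha\beta>0$, so $\alpha,\beta$ are either a complex conjugate pair or two reals of the same sign. The two colliding branches share $\{\alpha,\beta\}$ and differ only in whether the eigenvalue near $+1$ lies above or below $1$. A direct case analysis shows that the only configuration absent in dimension $2$ is $\alpha\in(-1,0),\beta\in(-\infty,-1)$; for this configuration both colliding branches have $\sigma^-=1$, which forces both indices to be zero and produces the $(0,0)$ saddle-node.

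For a period-doubling at $x$ with eigenvalue $-1$, orientation forces $\alpha\beta<0$, so at most one of $\alpha,\beta$ lies in $(1,\infty)$ and at most one in $(-\infty,-1)$; this leaves four hyperbolic configurations $(p,c)\in\{0,1\}^2$. The persisting branch $x'$ differs from the pre-bifurcation branch $x_{\mathrm{pre}}$ only in that the eigenvalue near $-1$ crosses $-1$, flipping the parity of $\sigma^-$; on the period-$2n$ branch $x''$, the eigenvalues of $D_{f^{2n}}(x'')$ are close to $\{\nu,\alpha^2,\beta^2\}$ with $\nu$ near $+1$ and $\alpha^2,\beta^2>0$, so $\sigma^-(x'')=0$. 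Theorem \ref{indexinvariance} then fixes $\phi(x'')$ from $\phi(x_{\mathrm{pre}})$ and $\phi(x')$, and running through the four configurations leaves $-1\to(-1,0)$ as the only form new relative to Proposition \ref{rules2}.

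For a type $m\geq 3$ bifurcation at $x$, $D_{f_{t_0}^n}(x)$ carries a complex conjugate pair on the unit circle together with a single real $\gamma$, and orientation forces $\gamma>0$. The persisting branch $x'$ only sees $\gamma$ contribute to $\sigma^+$, while the two branches $x'',x'''$ of period $mn$ arise as a saddle-node-type pair on the two-dimensional center manifold, so their restricted $D_{f^{mn}}$-eigenvalues lie on opposite sides of $+1$; combining this with the common contribution $\gamma^m>0$ yields $\phi(x''')=-\phi(x'')$, and Theorem \ref{indexinvariance} then forces $\{\phi(x''),\phi(x''')\}=\{1,-1\}$. Splitting on $\gamma\in(0,1)$ versus $\gamma>1$ pins down $\phi(x')$ and produces precisely $1\to(1,-1,1)$ and $-1\to(-1,1,-1)$. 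The $n$-junction classification then follows by iterating these rules along the decomposition guaranteed by the definition of an $n$-junction, using that the decomposition is constrained to involve at most two distinct Orbit Indices. I expect the main obstacle to lie in the type-$m$ step, where one must invoke the structure of the resonance bifurcation on the two-dimensional center manifold in order to conclude that the two period-$mn$ branches contribute opposite signs of $\phi$ rather than equal ones.
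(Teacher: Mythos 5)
Your proposal is correct and follows essentially the same route as the paper: a case analysis of the eigenvalue configurations of $D_{f^n_t}(x)$ constrained by orientation-preservation (positive product, hence paired negative or complex eigenvalues) and the index invariance of Theorem \ref{indexinvariance}, carried out separately for saddle-nodes, period-doublings, and type $m\geq3$ bifurcations, with the $n$-junction classification inherited from the decomposition. The one substantive remark is that the center-manifold argument you flag as the main obstacle is unnecessary: since all eigenvalues of $D_{f^{mn}}$ at a type $m\geq3$ bifurcation are positive, neither multiplied branch can have index $0$, and the sum rule $\phi(x'')+\phi(x''')=\phi(x)-\phi(x')=0$ already forces the pair of indices to be $\{1,-1\}$, which is exactly how the paper concludes.
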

\begin{proof}
Let $x$ be a periodic orbit - for simplicity, we assume it is a fixed point. We study the admissible bifurcations it can undergo depending on whether $\phi(x)=0,1,-1$. In each case, we denote the eigenvalues of the differential by $(\lambda_1,\lambda_2,\lambda_3)$ and recall that as the one-parameter family $f_t:S\to S$, $t\in(0,1)$, is an orientation-preserving isotopy, we always have $\lambda_1\cdot\lambda_2\cdot\lambda_3>0$. Therefore, if there are negative multipliers, there are precisely two - and if there are complex-conjugate multipliers, again, there are exactly two. In particular, whatever the case, there always exists at least one positive eigenvalue.

We prove the statement on a case-by-case basis. We denote the multipliers by their values. Therefore, for brevity, we now introduce the following notations:
\begin{itemize}
    \item $\lambda=\alpha$ if $\lambda\in(0,1)$.
    \item $\lambda=\beta$ if $\lambda>1$,
    \item $\lambda=\gamma$ if $\lambda\in(-1,0)$.
    \item $\lambda=\delta$ if $\lambda<-1$.
\end{itemize}

We will often write $(\lambda_1,\lambda_2,\lambda_3)$ have the form $(r,q,z)$, where $r,q,z\in\{\alpha,\beta,\gamma,\delta\}$ to denote their type, as described above. We begin with the saddle-node case, where, without any loss of generality, we assume $\lambda_1>0$. It is easy to see that when $\lambda_2=\gamma$, $\lambda_3=\delta$, then we can smoothly vary the values multiplier $\lambda_1$ along $(1-\epsilon,1+\epsilon)$ without changing $\lambda_2,\lambda_3$ s.t. when it hits $1$ there is a saddle-node bifurcation. As the values of $\lambda_3,\lambda_2$ are unchanged along this isotopy, it follows that the Mallet-Yorke Index remains $0$, i.e., it does not change at the saddle-node bifurcation. This proves the only additional possibility for a saddle-node bifurcation when $dim(S)=3$ is that two $0$-Index periodic orbits collide and vanish.

\begin{figure}[h]
\centering
\begin{overpic}[width=0.35\textwidth]{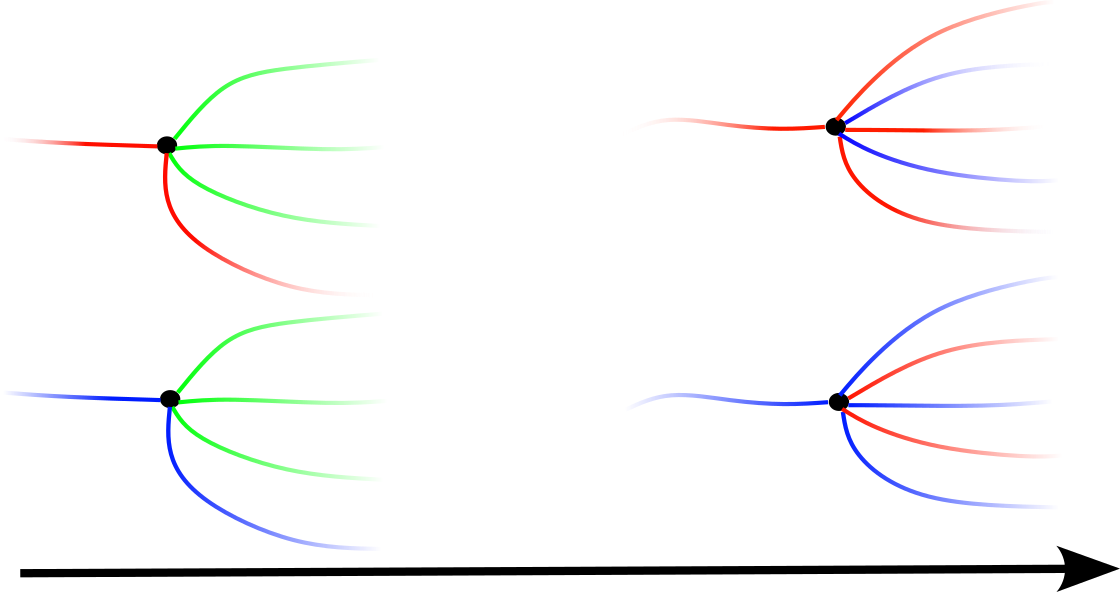}
\put(1020,5){$t$}
\end{overpic}
\caption{\textit{The admissible $n$-junctions in dimension $3$ for $n=4,5$. Again, the colors denote the indices of the corresponding orbits.}}
\label{rules3d}
\end{figure}

We now study the period-doubling case. We already know $1\to(0,1)$ is possible from the two-dimensional case and that $0\to(-1,1)$ is possible by \cite{KE}. By Lemma \ref{notype20} we cannot have $0\to(0,0)$, therefore, it remains to show that the bifurcation $-1\to(0,-1)$ is also possible. To this end, note that if $\phi(x)=-1$ and $x$ undergoes a period-doubling in dimension $3$, then before the bifurcation occurs, $(\lambda_1,\lambda_2,\lambda_3)$ can only be of the form $(\beta,\gamma,\gamma)$ or $(\beta,\delta,\delta)$.

Therefore, we now proceed by performing a period-doubling bifurcation on $x$, $\phi(x)=-1$ as follows - we first recall that at the bifurcation, precisely one of the $\gamma$ or $\delta$ crosses over $-1$ to either $(-\infty,-1)$ or $(-1,0)$ (respectively), following which $x$ is split into $x',x''$, where $x''$ has its period-doubled. Moreover, we do so s.t. the eigenvalues $(\lambda_1,\lambda_2,\lambda_3)$ of the differential of $x'$, the non-doubled orbit, have the form $(\beta,\gamma,\delta)$ - as such, the Mallet-Yorke Index of $x'$ is $0$. Similarly, for the branch of $x''$ (i.e., the doubled branch), $(\lambda_1,\lambda_2,\lambda_3)$ have the form $(\beta,\alpha,\alpha)$ (the doubled branch, whose index is $-1$). All in all, it follows that $-1\to(0,-1)$ is a possibility when the dimension of $S$ is $3$.

We now consider type $m\geq3$ bifurcation. By the above, at such bifurcations $\lambda_1$ is real (and positive), while $\lambda_2,\lambda_3$ are conjugate $m$-th roots of unity. Therefore, only periodic orbits of Mallet-Yorke Index $\pm1$ can have their period $m$-multiplied. This implies that when $dim(S)=3$, type $m\geq3$ bifurcation can belong to at most one of the following options:
\begin{enumerate}
    \item $-1\to(1,-1,-1)$
    \item $-1\to(-1,0,0)$
    \item $1\to(-1,1,1)$,
    \item $1\to(0,1,0)$.
\end{enumerate}

Where, similarly to the two-dimensional case, by the Lefschetz Fixed Point Theorem we know the Mallet-Yorke Index survives along the branch of orbits whose period is \textbf{not} doubled by $m$. Now, assume the period of $x$ is $n$ and that it goes through a type $m\geq3$ bifurcation at $t_0\in(0,1)$ - by the arguments above, its Mallet-Yorke Index is either $1$ or $-1$. We further note that by both definition and continuity, all eigenvalues of $D_{f^{mn}_{t_0}(x)}$ at the bifurcation must be positive (since $\lambda_1>0$ and $\lambda_2,\lambda_3$ are roots of unity). This implies that there can be no $0$ orbits in the periodic orbits splitting from $x$ whose minimal periods are $mn$. In other words, the only possibilities are $-1\to(1,-1,-1)$ and $1\to(1,-1,1)$.

We now conclude the proof by considering the possible $n>3$-junctions. By the previous discussion, all the $k$-junctions admissible in the two-dimensional case are also admissible in the three-dimensional one. Combined with the analysis above and recalling the orbits bifurcating from $n$-junctions can be divided to at most two indices, a similar argument to Proposition \ref{rules2} yields the two forms in Figure \ref{rules2d}.
\end{proof}
\begin{remark}
    In accordance with previous remark, the $0\to(-1,1)$ period-doubling bifurcation discussed above is the one, corresponding to the Noose Bifurcation discovered in \cite{KE} for four dimensional flows.
\end{remark}
\begin{figure}[h]
\centering
\begin{overpic}[width=0.4\textwidth]{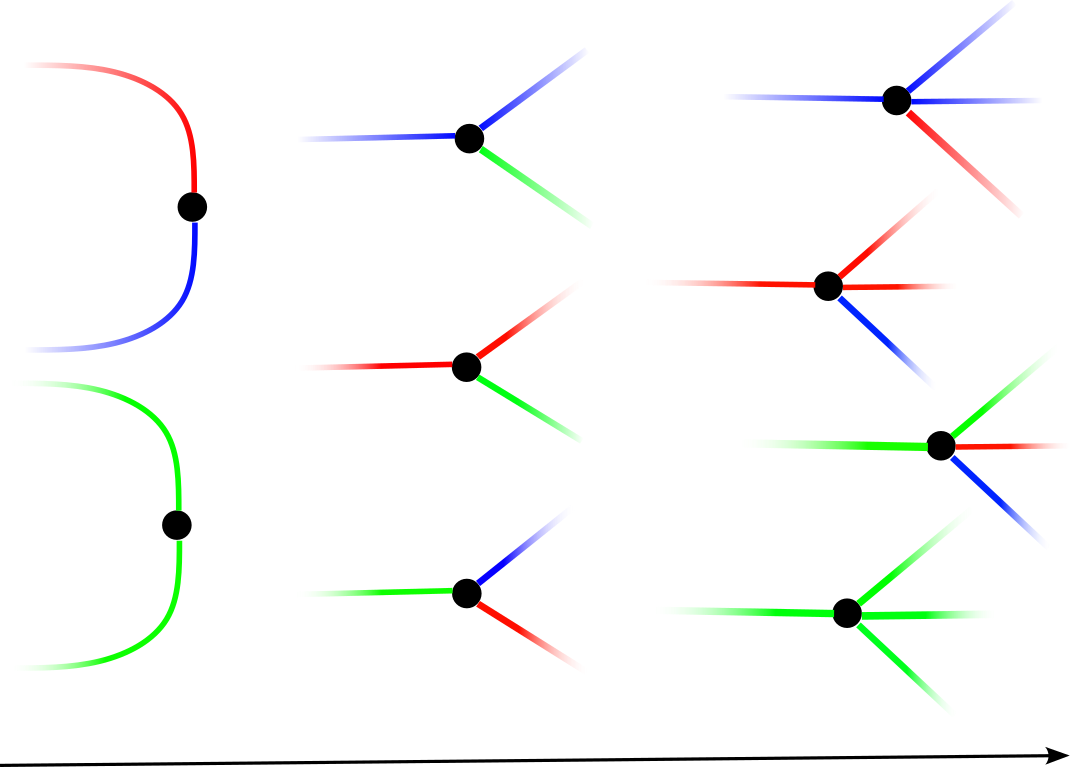}
\put(1020,0){$t$}
\end{overpic}
\caption{\textit{The admissible saddle-node, period-doubling and type $m\geq3$ when  $dim(S)\geq4$, sketched as diagrams. On red arcs the index is $-1$, on blue the index is $1$, while green arcs denote index $0$. The black dots denote the bifurcation points. }}
\label{fig5}
\end{figure}

Having studied the three-dimensional case, we finally study the case where $dim(S)=d\geq4$. Again, it is easy to see that by adding a finite number of eigenvalues in the interval $(0,1)$, all bifurcations admissible in dimension $3$ can also occur in dimension $4$ and higher. With that in mind, we now prove:

\begin{proposition}
    \label{rules4} When $dim(S)\geq4$, in addition to the bifurcation rules of dimensions $2$ and $3$, the only possible extra bifurcations for type $0$ orbits are two type $m\geq3$ bifurcations of the kind $0\to(0,0,0)$ and $0\to(0,-1,1)$ (see the illustration in Figure \ref{fig5}). Consequentially, the only additional $n$-junction has the type of an orbit splitting into $3j$ orbits, all of which have Mallet-Yorke Index $0$ (see the illustration in Figure \ref{rules4d}).
\end{proposition}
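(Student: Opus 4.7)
\emph{Proof plan.} My approach is to extend the case analysis of Propositions~\ref{rules2} and~\ref{rules3} to dimensions $d\geq 4$. The essential new feature is that in dimension $4$ and higher, the differential $D_{f^n_{t_0}}(x)$ has enough room to carry two complex conjugate $m$-th roots of unity alongside an odd number of real eigenvalues in $(-\infty,-1)$. This was precisely the obstruction that prevented $\phi(x)=0$ orbits from undergoing a type $m\geq 3$ bifurcation in Proposition~\ref{rules3}, and removing it is exactly what produces the two claimed new rules. Since any bifurcation admissible in lower dimensions remains admissible in higher ones by appending spectator eigenvalues in $(0,1)$ (which changes neither orientation nor Orbit Index), it suffices to rule out all other unexpected bifurcations.

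First I would rule out new saddle-node and period-doubling bifurcations. For a saddle-node in any dimension, the differentials of the two colliding orbits coincide except for the one eigenvalue crossing $+1$ in opposite directions; this gives $\sigma^+(x^+)=\sigma^+(x^-)+1$ and $\sigma^-(x^+)=\sigma^-(x^-)$, so the only resulting Orbit-Index pairs are $(1,-1)$ (when $\sigma^-$ is even) or $(0,0)$ (when $\sigma^-$ is odd), both already present in dimensions $2$ and $3$. For period-doubling, the eigenvalues of $D_{f^{2n}_{t_0}}(x)$ are the squares of those of $D_{f^n_{t_0}}(x)$; generically every such square is either non-negative real or non-real complex, so $\sigma^-(x'')=0$ for the bifurcating doubled orbit $x''$, forcing $\phi(x'')\in\{\pm 1\}$. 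Combined with Theorem~\ref{indexinvariance} and Lemma~\ref{notype20}, this forces exactly the three period-doubling forms already admissible in dimension $3$.

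For type $m\geq 3$ bifurcations, the cases $\phi(x)=\pm 1$ proceed exactly as in Proposition~\ref{rules3}: the period-$n$ branch $x'$ inherits $\phi(x')=\phi(x)$ because the complex pair moves off the unit circle while remaining complex, and the same eigenvalue-count argument still forbids the period-$mn$ branches from having index $0$. The new case is $\phi(x)=0$, which first becomes realizable in dimension $4$ via an eigenvalue configuration with one real in $(-\infty,-1)$, one real in $(-1,0)$, and the two roots of unity. For the period-$mn$ branches $x''$ and $x'''$, an eigenvalue $\lambda_i^m$ of $D_{f^{mn}_{t_0}}(x)$ lies in $(-\infty,-1)$ only when $\lambda_i<-1$ and $m$ is odd. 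Hence when $m$ is odd, $\sigma^-(x'')=\sigma^-(x)$ is odd and $\phi(x'')=\phi(x''')=0$, yielding $0\to(0,0,0)$; when $m$ is even every $\lambda_i^m\geq 0$, so $\sigma^-(x'')=0$ and $\phi(x'')\in\{\pm 1\}$, and the Orbit-Index invariance $\phi(x'')+\phi(x''')=0$ then forces $0\to(0,-1,1)$.

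Finally, the $n$-junction statement follows from the admissibility requirement that a decomposition involves at most two distinct Orbit Indices. The new rule $0\to(0,-1,1)$ already uses three distinct indices and therefore cannot appear as a step inside an admissible junction. Iterating $0\to(0,0,0)$ alone, however, produces precisely the new $n$-junctions depicted in Figure~\ref{rules4d}, in which an index-$0$ orbit splits entirely into index-$0$ orbits. The step I expect to be the main technical obstacle is tracking how the two eigenvalues of $D_{f^{mn}_{t_0}}(x)$ sitting at $1$ (coming from raising the $m$-th roots of unity to the $m$-th power) perturb for $x''$ and $x'''$: generically they split either into a real pair or into a complex conjugate pair, but in either case continuity keeps them away from $(-\infty,-1)$, and it is exactly this that lets $\sigma^-(x'')$ depend only on the powers $\lambda_i^m$ of the genuine real eigenvalues of $x$.
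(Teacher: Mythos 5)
Your proposal follows essentially the same route as the paper's proof: it establishes realizability of $0\to(0,0,0)$ and $0\to(0,-1,1)$ from a configuration $(\gamma,\delta,r,\overline{r})$ with one real eigenvalue in $(-1,0)$, one in $(-\infty,-1)$, and a conjugate pair of roots of unity (the paper takes $m=3$ and $m=4$ where you organize the same computation by the parity of $m$), and it excludes $\pm1\to(0,\pm1,0)$ by showing that $\sigma^-$ stays even along the period-$mn$ branches. The one imprecise step is your claim that $\sigma^-(x'')$ depends only on the $m$-th powers of the \emph{real} eigenvalues of $x$, i.e., that $\lambda_i^m\in(-\infty,-1)$ forces $\lambda_i<-1$ and $m$ odd. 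This fails as soon as $D_{f^n_{t_0}}(x)$ carries a complex conjugate pair $\mu,\overline{\mu}$ off the unit circle with $\mu^m$ real and less than $-1$ (e.g. $\mu=2e^{i\pi/3}$ with $m=3$), which is already possible in dimension $4$ for the $\phi(x)=\pm1$ case and certainly in dimension $\geq5$; so "continuity keeps them away from $(-\infty,-1)$" does not dispose of all the non-real spectrum. The paper closes exactly this case by observing that such eigenvalues necessarily occur in conjugate pairs and hence contribute an even number to $\sigma^-(x'')$, so the parity — which is all the exclusion argument needs — is unaffected. With that repair your argument coincides with the paper's. Your handling of the $n$-junctions (ruling out $0\to(0,-1,1)$ as a constituent because it realizes three distinct indices, and iterating $0\to(0,0,0)$ to get the all-green junctions on $3k$ orbits) matches the paper's conclusion and is, if anything, more explicit about why the definition of admissible junctions forbids the mixed rule.
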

\begin{proof}
    By Lemma \ref{notype20} there is no period-doubling bifurcation with $0\to(0,0)$. Therefore, as all the other possible cases were already covered in dimensions $3$ and $2$, we need only consider the type $m$ bifurcations. Again, by the low-dimensional case we already know $-1\to(-1,1,-1)$ and $1\to(1,-1,1)$ are a possibility, so we need only consider the four cases:
    \begin{enumerate}
        \item $0\to(0,0,0)$.
        \item $0\to(0,-1,1)$.
        \item $-1\to(0,-1,0)$
        \item $1\to(0,1,0)$.
\end{enumerate}

We use the same notation with $\alpha,\beta,\gamma,\delta$ as in Proposition \ref{rules3}. Moreover, recall that if the eigenvalues of the differential at any periodic orbit are denoted by $\lambda_i$, $i=1,2,3,4$, then by the orientation-preserving assumption on the isotopy we have $\prod_{i=1}^n\lambda_i>0$. We first show $0\to(0,0,0)$ and $0\to(0,-1,1)$ can occur when $dim(S)=4$ - which will show that these two bifurcations can occur in all higher dimensions as well. We first prove that $0\to(0,0,0)$ can occur -  assume $x$ is a fixed point with multipliers $(\lambda_1,\lambda_2,\lambda_3,\lambda_4)=(\gamma,\delta,r,\overline{r})$ (where $r,\overline{r}$ are complex-conjugate), which implies $\phi(x)=0$. By splitting it into $3$ orbits in a type $3$ bifurcation we see that along the $0$ branch that doesn't change its period, the eigenvalues do not change their type, i.e., the Mallet-Yorke Index remains zero and the differential of this branch of orbits has a pair of complex-conjugate eigenvalues. By taking the third iterate, the multipliers change along each of the tripled branches into either $(\gamma^3,\delta^3,\alpha,\alpha)$, $(\gamma^3,\delta^3,\beta,\beta)$ or $(\gamma^3,\delta^3,\alpha,\beta)$. Since $\gamma^3,\delta^3$ remain in $(-1,0)$, $(-\infty,-1)$ (respectively), the Mallet-Yorke Index is $0$ along each branch as $\sigma^-=1$. This proves that $0\to(0,0,0)$ is a possibility.

\begin{figure}[h]
\centering
\begin{overpic}[width=0.35\textwidth]{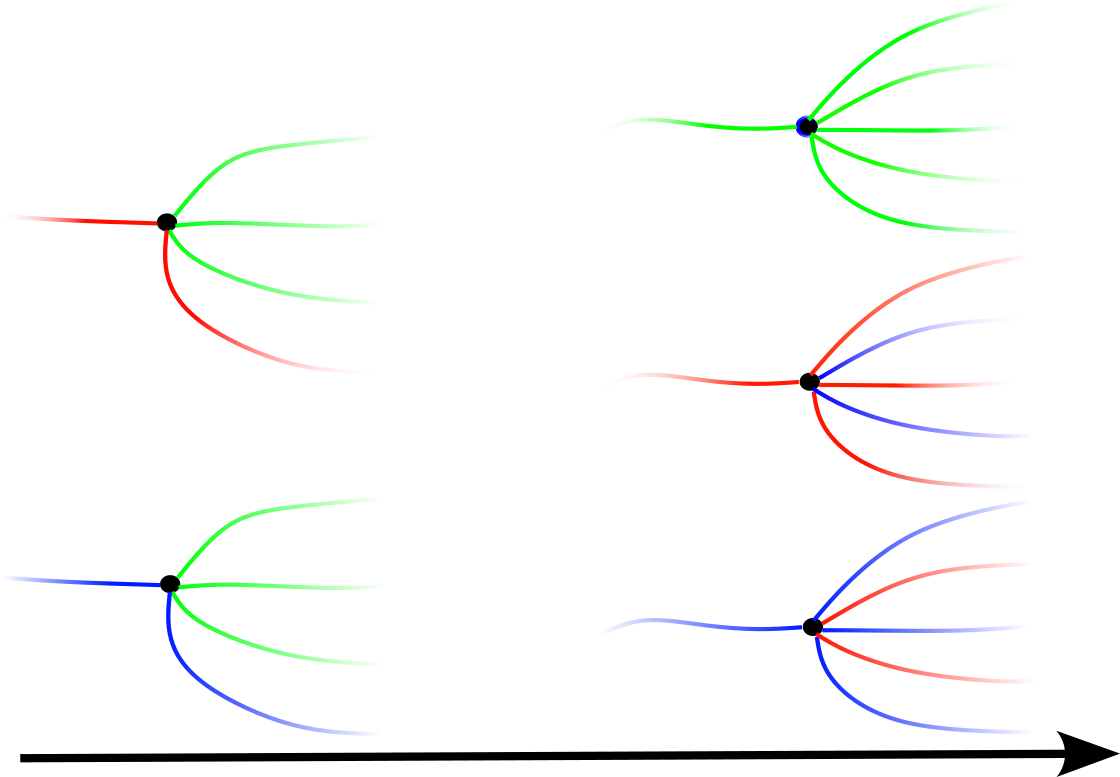}
\put(1020,10){$t$}
\end{overpic}
\caption{\textit{The admissible $n$-junctions in dimension $4$ for $n=4,5$. Again, the colors denote the indices of the corresponding orbits - red for $-1$, blue for $1$, and green for $0$.}}
\label{rules4d}
\end{figure}

We now show $0\to(0,-1,1)$ is also a possibility. Again, whenever this occurs, the Mallet-Yorke Index remains $0$ along the branch which is \textbf{not} $m$-multiplied. Now, take a fixed point $x$ with eigenvalues of the type $(\gamma,\delta,r,\overline{r})$ and perform a type $4$ bifurcation which changes the eigenvalues along the two period $4$ branches splitting from $x$ at the bifurcation to $(\gamma^4,\delta^4,\alpha,\alpha)$ and $(\gamma^4,\delta^4,\alpha,\beta)$ (this agrees with the Lefschetz number of the iterates). All in all, we see that $0\to(0,-1,1)$ is also a possibility.

We now conclude the proof by showing that at any dimension $d\geq4$, when $\phi(x)\in\{-1,1\}$ the orbit $x$ cannot split by the rule $\phi(x)\to(0,\phi(x),0)$ in a type $m\geq3$ bifurcation (where $n$ denotes the period of $x$). To this end, the eigenvalues of $D_{f_t^n}(x)$ which are complex $m$-th roots of numbers in $(-\infty,-1)$ always come in complex-conjugate pairs - where by complex $m$-th root we mean any eigenvalue for $D_{f^n}(x)$, which is \textbf{not} negative for $D_{f^n}(x)$, yet becomes negative for $D_{f^{mn}}$ (by definition, such eigenvalues cannot be real).

Now, assume $x$ splits to $x',x'',x'''$ at a type $m\geq3$ bifurcation at some $t_0\in(0,1)$ s.t. as we cross into $t>t_0$ the period is preserved at $x'$ and $m$-multiplied at $x''$ and $x'''$. If $\sigma^-$ is even for $x$ (which is the case when $\phi(x)\in\{-1,1\}$), it remains so for $x'$. Moreover, it follows that $\sigma^-$ also remains even for $x''$ and $x'''$ due to the following:
\begin{itemize}
    \item Negative eigenvalues for $D_{f_{t_0}^n}$ in $(-\infty,-1)$ either remain all negative or become all positive for $D_{f_t^{mn}}$, $t>t_0$ - depending on whether $m$ is odd or even. Therefore, the number of these "inherited" eigenvalues, $n_1$, either becomes $0$ or stays the same, hence, even.
    \item As the complex $m$-th roots of unity come in pairs, they add even number of eigenvalues $n_2$ in $(-\infty,-1)$.
\end{itemize}

This implies $D_{f_t^{mn}}, t>t_0$, has $n_1+n_2$ eigenvalues in $(-\infty,-1)$, hence, $\sigma^-=n_1+n_2$ is even. In other words, neither $x''$ or $x'''$ can have an odd number of eigenvalues in $(-\infty,-1)$ w.r.t. $D_{f_t^{mn}}$. Hence, a splitting $\phi(x)\to(0,\phi(x),0)$, $\phi(x)=\pm1$ is impossible, as a $0$ branch must have $\sigma^-$ odd. As a consequence, all the $n>3$-junctions are as in Figure \ref{rules4d} - i.e., in additional to all the possible three-dimensional $k$-junctions, the only extra possibility is an $n$-junction whose orbits all have Mallet-Yorke Index $0$.
\end{proof}
\begin{remark}
    At this point we remark that the proofs of Propositions \ref{rules4}, \ref{rules3} and \ref{rules2} only show which bifurcations of periodic orbits \textbf{cannot be ruled out topologically} in $F_k(S)$, $k>0$ - not that they necessarily occur along some isotopy. In other words, the propositions give a "topological upper bound" on the complexity of bifurcations.
\end{remark}
Having studied the admissible bifurcations of periodic orbits in $F_1(S)$ and $F_2(S)$ (and to a lesser extent, also $F_k(S)$, $k\geq3$), we are now ready to make the transition from one-parameter families into colored graphs. To this end, again, let $S$ be a closed smooth manifold of dimension $d\geq2$, and let $f_t:S\to S$, $t\in(0,1)$ be a $C^1$-one parameter family in $F_k(S)$, $k\geq1$. Let $Per\subseteq S\times(0,1)$ denote the collection of points satisfying the following:

\begin{itemize}
    \item Every $(x,t)\in Per$ is periodic for $f_t$, i.e., there exists some minimal $n>0$ s.t. $f^n_t(x)=x$.
    \item Every $(x,t)$ lies on a periodic orbit for $f_t$ that is either type $0$, type $1$, type $2$, type $m\geq3$, or a $n$-junction orbit (for some $n\geq4$). Moreover, given any component $C$ of $Per$, for a.e. $t$, $(y,t)$ lies on a type $0$ periodic orbit $(x,t)$ for $f_t$. 
    \item  There are no components of $Per$ that are singletons - i.e., all components are branched $1$-manifolds.  
    \item Finally, if $C$ is a component of $Per$ and $(x,t)=C\cap S\times\{t\}$ is a branching point for $C$ (for some $t$), then $x$ is either a type $2$ bifurcation, a type $m\geq3$ bifurcation, or an $n\geq4$-junction.
\end{itemize}

In other words, the set $Per$ is composed \textbf{only} from the periodic orbits which we admit in $\cup_{j}F_j(S)$ and their bifurcations. In particular, if $C$ is a component of $Per$ and $(x,t)=C\cap S\times\{t\}$ is a branching point where $C$ fails to be an arc, then the following holds:
\begin{itemize}
    \item If $C\setminus(x,t)$ includes three components, $x$ lies on a period-doubling orbit for $f_t$.
    \item If $C\setminus(x,t)$ includes four components, $x$ lies on a type $m$ periodic orbit for $f_t$ for some $m\geq3$.
    \item If $C\setminus (x,t)$ has more than four components, $x$ lies on some $n$-junction for $f_t$ for some $k+1\geq n\geq4$.
\end{itemize}

We note that $F(C)=C'$ for some $C'$ a component of $Per$, where $F:S\times(0,1)\to S\times(0,1)$ is defined by $F(x,t)=(f_t(x),t)$. Moreover, $F(C)=C$ precisely when $C$ includes a fixed point for some $f_t$, $t\in[0,1]$. We now define an equivalence relation on the components of $Per$, where $C\sim C'$ if there exists some $n$ s.t. $F^n(C)=C'$ - which makes every equivalence class $[C]$ into a branched curve, where, again, the branches correspond to the bifurcation orbits. We now prove the following easy Lemma:
\begin{lemma}
\label{colorlemma}    $[C]$ is a graph $(V',E')$, whose set of vertices $V'$ corresponds to the branching points and its edges $E'$ are curves (see the illustration in Figure \ref{nodrawback}). Moreover, we can canonically color the edges in $E'$ based on the Mallet-Yorke Index.
\end{lemma}
\begin{proof}
 It is easy to see $[C]$ is a graph - it remains to describe how it can be colored. By $C\subseteq Per$ we know that given any edge $e\in E'$, we can define the Mallet-Yorke Index on $e$. To see why, recall $e$ corresponds to some arc $c$ on $C$, composed of points that lie on type $0$ periodic orbits. Whenever $(y,t)\in c$ we know $(y,t)$ lies on some periodic orbit $x$ for $f_t$ which, by definition, has to be type $0$ - hence, its Mallet-Yorke Index is well defined as either $\pm1$ or $0$. The Mallet-Yorke Index is independent of our choice of component $C\in[C]$, and since by Theorem \ref{indexinvariance} we know the Mallet-Yorke Index is constant as we vary $(y,t)$ along $c$, we can assign the same index, i.e., $-1,1$ or $0$ to $e$. All in all, it follows we can assign a color to every edge $e\in E'$ - say, red for $-1$, blue for $1$, and green for $0$ (see the illustration in Figure \ref{nodrawback}). 
\end{proof}
\begin{remark}
    Some doubts may be raised about whether $c$ truly is an arc on $C$. To dispel them, note that with previous notations, if $n$ is the minimal period of $x$ then as $x$ is type $0$ and $y$ lies on $x$, the differential $D_{f^n_t}(y)$ is an invertible matrix. By the Implicit Function Theorem this implies the solutions to the equation $g(y,r)=f^n_r(x)$ form a curve in $S\times(0,1)$ passing through $(y,t)$ - at least when $|r-t|$ is sufficiently small. Since $(y,t)$ is some arbitrary point on $c$ the assertion follows.
\end{remark}
With that geometric picture in mind, we now proceed to define our notion for a bifurcation diagram:
\begin{definition}
    \label{bifurcationgraph} Let $f_t:S\times[0,1]\to S$ be a $C^1$ one-parameter family of diffeomorphisms in $F_k(S)$. With previous notations, its \textbf{admissible bifurcation diagram}, or in short, \textbf{bifurcation diagram} is a colored graph $\Gamma=(V,E)$ defined as follows (see the illustration in Figure \ref{nodrawback}):
    \begin{itemize}
        \item Each equivalence class $[C]$ corresponds to precisely one component of $\Gamma$ - a subgraph denoted by $\gamma$, where $\gamma=(V',E')$ as above.
       \item For every component $\gamma\subseteq \Gamma$, we color the edges $E'$ of $\gamma=(V',E')$ as described in Lemma \ref{colorlemma}, where blue corresponds to Mallet-Yorke Index $1$, red to $-1$ and green to $0$ (see the illustration in Figure \ref{nodrawback}).
    \end{itemize}

Note that since we assume the one-parameter family is in $F_k(S)$ - i.e., a periodic orbit is allowed to split to at most $k+1$ periodic orbits - it immediately follows the vertex of every graph $\gamma\in \Gamma$ has valence \textbf{at most} $k+2$. In particular:
\begin{itemize}
    \item A vertex corresponding to a saddle node has degree 2,
    \item A vertex corresponding to period doubling has degree 3.
    \item A vertex corresponding to type $m\geq3$ bifurcation has degree 4.
    \item A vertex corresponding to a $j$-junction, $k+1\geq j>3$ has degree $j+1$.
\end{itemize}
\end{definition}
 \begin{figure}[h]
\centering
\begin{overpic}[width=0.2\textwidth]{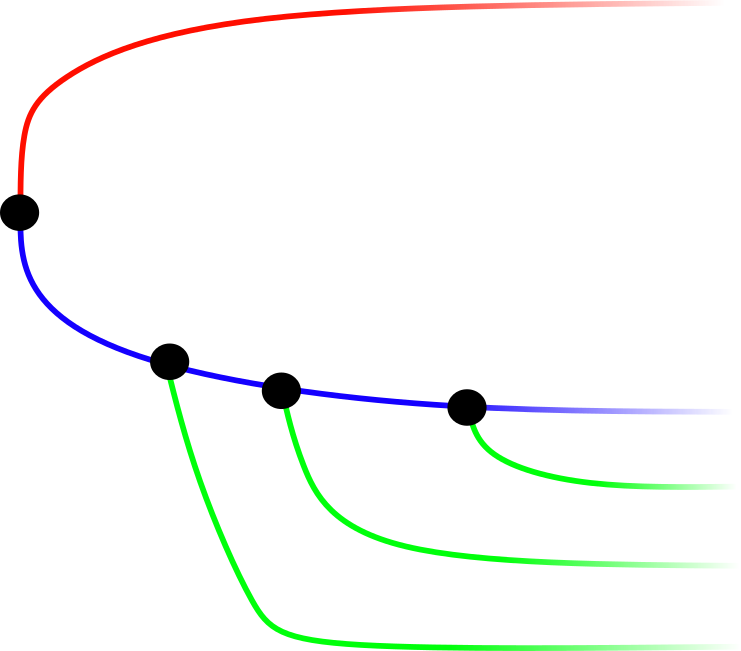}
\end{overpic}
\caption{\textit{A (possible) bifurcation diagram in $F_1(S)$ including one vertex corresponding to a saddle-node bifurcation and three others corresponding to period-doubling bifurcations.}}
\label{nodrawback}
\end{figure}

Having defined the bifurcation diagram of a given one-parameter family $f_t:S\to S$, $t\in(0,1)$ in $F_k(S)$, $k\geq1$, we now move to discuss the bifurcation diagrams corresponding to one-parameter families in $F_j(S)$ as a collection of graphs. To this end, we define the following:
\begin{definition}
    \label{graphset} Let $S$ be a smooth closed manifold of dimension $d\geq2$ and let $F_k(S)$ be as before. Then, $B_k(S)$, the \textbf{bifurcation set of $F_k(S)$,} will denote a collection of  colored graphs satisfying the following:
    \begin{itemize}
        \item There exist a map $\pi:F_k(S)\to B_k(S)$ assigning each curve in $F_k(S)$ a bifurcation diagram.
        \item If $\Gamma\in B_k(S)$, then $\pi^{-1}(\Gamma)$ is non-empty.
    \end{itemize}
\end{definition}
\begin{figure}[h]
\centering
\begin{overpic}[width=0.3\textwidth]{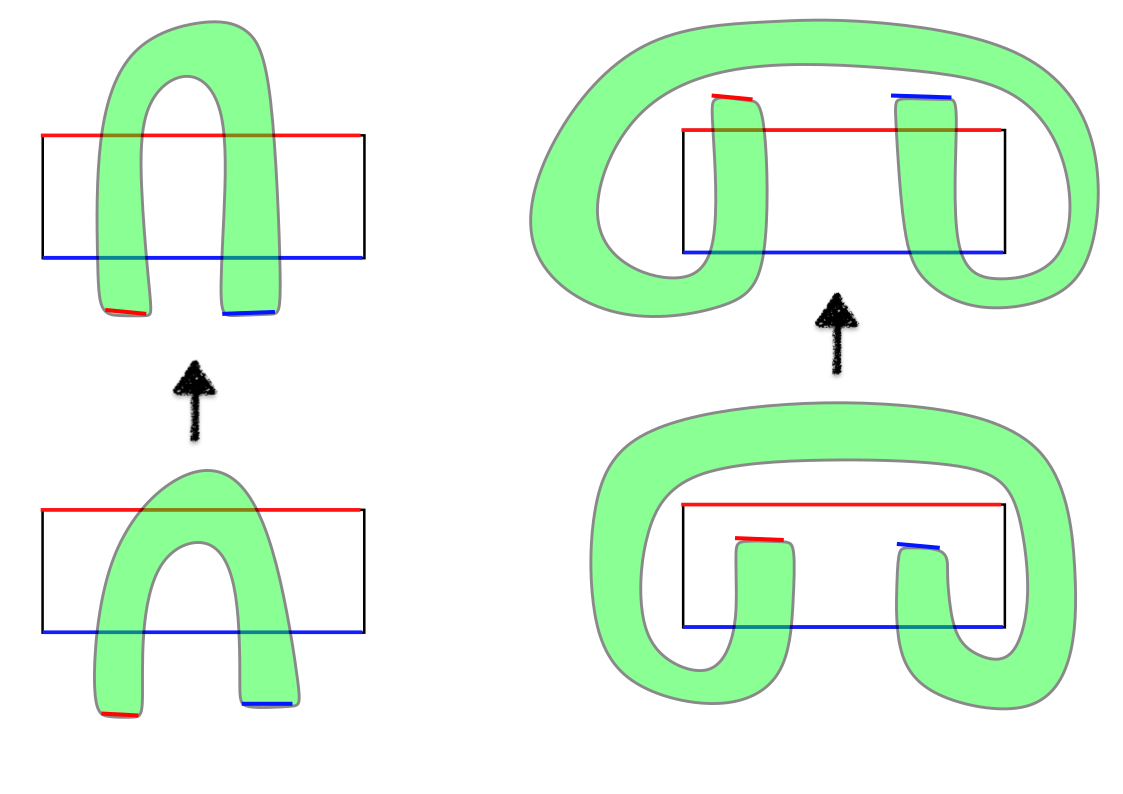}
\end{overpic}
\caption{\textit{Two illustrations of a rectangle isotopy progressing into a Smale horseshoe.}}
\label{horseshoes}
\end{figure}

In other words, $B_k(S)$ is a "moduli space" of possible bifurcation diagrams. Note that in general, there is no good reason to assume $\pi$ is injective - for example, consider to isotopies creating different horseshoes, as illustrated in Figure \ref{horseshoes}. By \cite{KY4} we know that under certain conditions, periodic orbits in both would be expected to appear via a period-doubling cascades of attractors. If $\Gamma_1$ and $\Gamma_2$ are the bifurcation diagrams corresponding to these isotopies, it may well be that $\Gamma_1=\Gamma_2$. 

The main conclusion from the discussion above is that in order to study the bifurcations of periodic orbits for one-parameter families in $F_k(S)$ it would suffice to study the graphs in $B_k(S)$. Unfortunately, this approach is too naive to work, even in low dimensions - specifically, the bifurcation diagram of a one-parameter family of surface diffeomorphisms $f_t:S\to S$, $t\in(0,1)$ may well depend on the isotopy class of, say, $f_{\frac{1}{2}}$ (for a survey of these ideas, see \cite{Bo}). As such, one probably cannot hope to begin developing a general theory based on the $B_k(S)$, $j\geq1$ alone. To deal with this difficulty, we instead consider a family of graphs $G_{k,d}$ which, even though it is possibly larger than $B_k(S)$, can be analyzed easily. To do so, let $G_{k,d}$ denote the collection of all possible colored graphs satisfying the following (see Figures \ref{fig3}, \ref{fig4} and \ref{fig5}):
\begin{itemize}
    \item If $\Gamma\in G_{k,d}$, every vertex of $\Gamma$ has degree at most $k+2$.
    \item Recalling we identify the Mallet-Yorke Indices $1,-1$ and $0$ with the colors blue, red and green (respectively), the graphs of $G_{k,d}$ are colored s.t. Theorem \ref{indexinvariance} is satisfied.
    \item Depending on $d$, the colorings of graphs in $G_{k,d}$ also obey the bifurcation rules dictated by Propositions \ref{rules2}, \ref{rules3} and \ref{rules4} (respectively).
\end{itemize}

Thus, $G_{k,d}$ is the collection of all colored graphs which could be created by the admissible bifurcation rules in $F_k(S)$, when treated abstractly as "recipes" for creating bifurcation diagrams. It is easy to see that by definition, for every closed, smooth $S$ of dimension $d$ we have $B_k(S)\subseteq G_{k,d}$. 

This definition of $G_{k,d}$ naturally leads us to the following problem: for which manifolds $S$ should we expect $G_{k,d}=B_k(S)$? We will not address this problem in this paper, as it is beyond its scope - and in fact, due to the deep connection between periodic orbits and the topology of the manifold on which they lie, it is very likely there is no general answer to this question. That being said, one can construct explicit examples where $B_k(S)\ne G_{k,d}$, which we now discuss (see also Appendix \ref{everygraph}).

To illustrate such an example, consider the case when $S=S^2$ and $k=1$ - we are now going to show one can have admissible diagrams in $G_{1,2}$ which cannot occur in $F_1(S^2)$ (although they cannot be ruled out as a bifurcation subdiagram for a larger set). In detail, consider a diagram which includes a period-doubling bifurcation where a $1$-Mallet-Yorke Index orbit splits into a orbits with $0$ and $1$ Mallet-Yorke Indices. Now, consider some periodic orbit with Mallet-Yorke Index $0$ and connect it with another period-doubling bifurcation. This creates a colored graph as in Figure \ref{nonadmissible}, with two edges, $A_1, A_2$, each corresponding to Mallet-Yorke Index $1$, connected by an edge corresponding to Mallet-Yorke Index $0$ (see the left image in Figure \ref{nonadmissible}). Now, consider, say, $A_2$, and perform a saddle-node bifurcation at both its ends, which allows the creation of a cycle, as on the right image in Figure \ref{nonadmissible}. This new colored graph lies in $G_{1,2}$, and yet, by Proposition \ref{rules2} we know it correspond to any $C^1$ one-parameter family $f_t:S^2\to S^2$ in $F_1(S)$, where $t\in(0,1)$.

 \begin{figure}[h]
\centering
\begin{overpic}[width=0.25\textwidth]{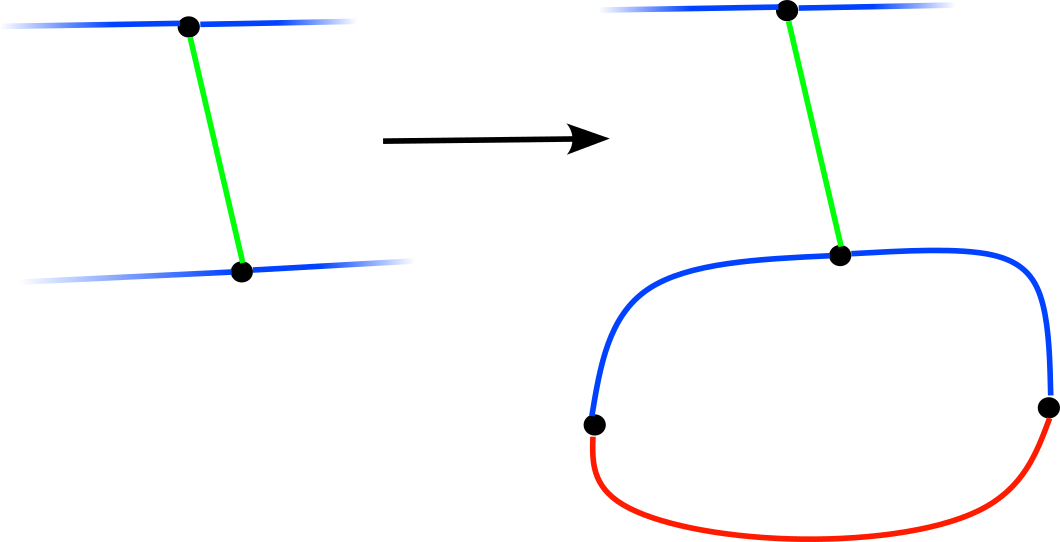}
\end{overpic}
\caption{\textit{The left graph consists of two edges denoting Mallet-Yorke Index $1$ (in blue) connected by a Mallet-Yorke Index $0$ edge (the green arc). Assuming $A_2$ is the lower arc, then in $G_{1,2}$ it can be closed by two saddle-node bifurcations connected by a $-1$ Mallet-Yorke Index edge (i.e., the red edge). The colored graph generated by this proccess (on the right) is admissible in $G_{1,2})$, but cannot be a bifurcation diagram in $F_1(S^2)$.}}
\label{nonadmissible}
\end{figure}

That being said, even though the argument above proves we cannot a priori assume $G_{k,d}\setminus B_k(S)=\emptyset$, it \textbf{does not} rule out $\Gamma\in G_{k,d}\setminus B_k(S)$ being a bifurcation subdiagram for some one-parameter family in $F_k(S)$ - or, put simply, even if $\Gamma$ cannot be realized as a bifurcation diagram on its own, it might be a subgraph of some other bifurcation diagram $\Gamma'\in B_k(S)$. In fact, we will prove in Appendix \ref{everygraph} that for every connected graph $\Gamma\in G_{k,d}$ there exists an isotopy $f_t:S\to S$ s.t. $\Gamma$ is a subgraph of its bifurcation diagram (although that isotopy may not necessarily lie in $F_k(S)$), see Theorem \ref{existence} for the proof. This discussion shows that even though $B_k(S)$ and $G_{k,d}$ cannot be assumed to coincide, intuitively $G_{k,d}$ can be thought of as an idealized moduli space of bifurcation diagrams which generalizes $F_k(S)$. Therefore, in the remainder of this paper we will analyze graphs inside the set $G_{k,d}$. As it will be clear, these graphs can be at least thought of as a very particular type of bifurcation diagrams, namely, routes to chaos.

\section{From bifurcation diagrams to graphs}
\label{tographs}
Having replaced bifurcation diagrams with colored graphs, we now proceed to analyze the collection of colored graphs $G_{k,d}$, using Graph Theory. To do this, we first recall several definitions and facts. Incorporating previous notations, a graph is always defined as a pair $G = (V, E)$ - where $V$ is the set of vertices or nodes and $E$ is the set of edges. In particular, we think of every element in $E$ as a set of unordered pairs $e=\{v_1, v_2\}$ of vertices in $V$, denoting which vertices on $G$ the edge $e$ connects. We will need the following definitions:

\begin{figure}[h]
\centering
\begin{overpic}[width=0.5\textwidth]{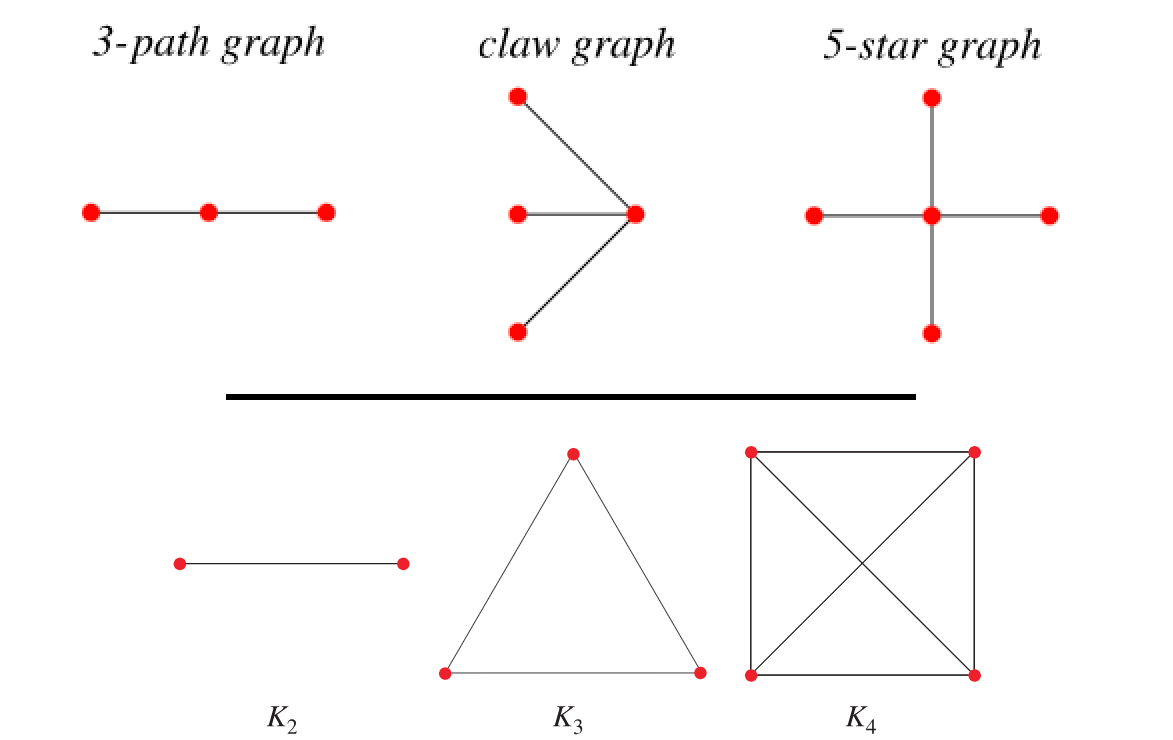}
\end{overpic}
\caption{\textit{Star graphs (on the upper level) and complete graphs (on the lower level). }}
\label{fig6}
\end{figure}

\begin{definition}
    \label{starcomplete} A graph $G=(V,E)$, where $|V|=n$, is said to be a \textbf{complete graph on $n$ vertices}, denoted by $K_n$, if each pair of vertices in $V$ is connected by an edge, $K_n$ (see the illustration in Figure \ref{fig6}). Conversely, $G$ is an \textbf{$n$--star graph} if it is a tree on $n$ vertices, with one vertex having degree $n-1$ and the other $n-1$ vertices having degree $1$ (see the illustration in Figure \ref{fig6}). 
\end{definition}
\begin{remark}
    An $n$-star is the complete bipartite graph $K_{(1,n-1)}$, whose vertices $V$ can be decomposed to a singleton $\{v_1\}$ and $\{v_2,...,v_n\}$ s.t. no edge in $E$ connects two vertices in $\{v_2,...,v_n\}$, while $v_1$ is connected by an edge to $v_j$, $2\leq j\leq n$ (see the illustration in Figure \ref{fig6}).
\end{remark}

The main obstacle we will face is that the sets $\{G_{k,d}\}$ are not collections of graphs - but rather a collection of colored graphs, where the coloring is defined by the Mallet-Yorke Index in accordance with Theorem \ref{indexinvariance} and Propositions \ref{rules2}-\ref{rules4}. As such, in order to study them we first need to find a way which on the one hand, allows us to reduce these complex collections of colored graphs to more manageable graphs and on the other hand, does not lose too much of the dynamical information in the process. As Figure \ref{fig6} suggests, we will now introducea way to do so - the \textbf{star representation}.

\begin{figure}[h]
\centering
\begin{overpic}[width=0.6\textwidth]{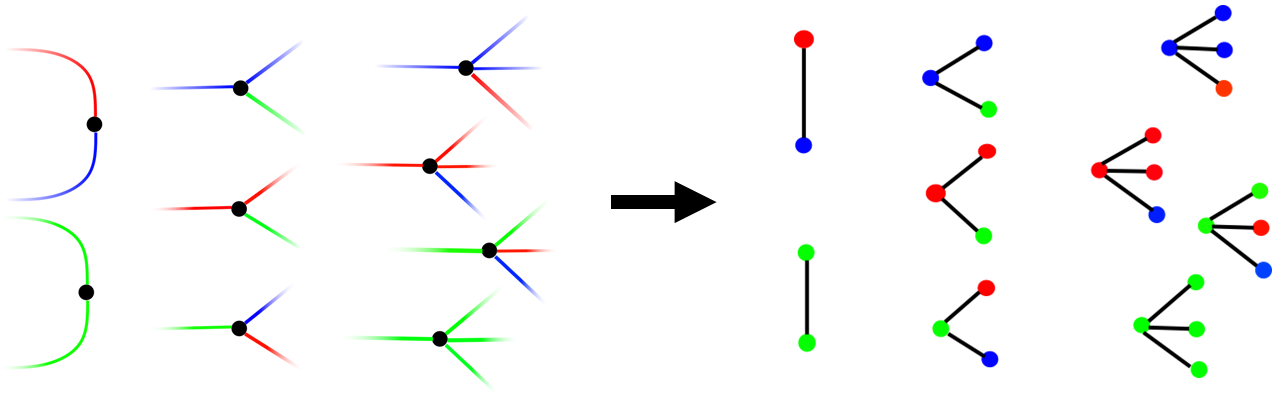}
\end{overpic}
\caption{\textit{The star representation of saddle-node, period-doubling and type $m\geq3$ bifurcations in d=4, see also Figure \ref{fig5}.}}
\label{project}
\end{figure}

The idea is the following: by Theorem \ref{indexinvariance} and Propositions \ref{rules2}-\ref{rules4} we know that every $\Gamma\in G_{k,d}$, $k>0,d>1$ can be decomposed to a collection of colored edges (where the color corresponds to the Mallet-Yorke Index), connected to one another by edges corresponding to bifurcation orbits. Therefore, if we change the role of edges and vertices we get a graph, with colored vertices instead of edges - which we can decompose into a collection of star graphs connected to one another at the vertices (see the illustration in Figure \ref{project} and Figure \ref{glue}). In detail, we transform the colored graphs in $G_{k,d}$ via star graphs as follows (see  Figures \ref{project}, \ref{project1}, \ref{glue}):

\begin{itemize}
    \item By definition, a saddle-node bifurcation is composed of two edges, $E_1,E_2$ colored with colors $c_1, c_2$ which are glued together at $v$. We map this to $2$-star graph with vertices colored by $c_1$ and $c_2$ corresponding to the Mallet-Yorke Index, as in Figure \ref{project}.
    \item Every period-doubling bifurcation is projected a $4$-star graph, where the edges $E_1,E_2$ and $E_3$ are colored with colors $c_1,c_2$ and $c_3$ (respectively), corresponding to the Mallet-Yorke Index. Assuming that the periodic orbit undergoes bifurcation corresponds to $E_1$, we map this to $3$-star graph with vertices colored by $c_1,c_2$ and $c_3$ s.t. $c_1$ has degree $2$, and $c_2$ and $c_3$ have degree $1$, see Figure \ref{project}.
    \item Similarly, a type $m\geq3$ bifurcation is defined as a $5$-star graph with edges $E_1,E_2,E_3$ and $E_4$, colored with colors $c_1,c_2,c_3$ and $c_4$ (respectively), again, corresponding to the Mallet-Yorke Index. Assume $E_1$ corresponds to the period orbit whose period is multiplied by $m$ - similarly to the period-doubling case, we map this to $4$-star graph, where $c_1$ becomes a vertex of degree $3$ connected to three vertices $c_2,c_3$ and $c_4$, see Figure \ref{project}. 
    \item Finally, recall an $n$-junction, $n>3$, is a periodic orbit that splits into $n$ distinct periodic orbits. This again defines an  $n+2$-star graph on edges $E_1,...,E_{n+2}$ which are colored with colors $c_1,...,c_{n+2}$  (respectively), corresponding to the Mallet-Yorke Index. Assuming again that $E_1$ corresponds to the bifurcating orbit, we similarly map this to  $n+1$-star graph with colored vertices s.t. $c_1$ becomes a vertex of degree $n+1$, see Figure \ref{project1}.
    \item The representation of a diagram from $\Gamma\in G_k(S)$ is defined as the representation of its decomposition to these star graphs, glued at the corresponding vertices.
\end{itemize}

\begin{figure}[h]
\centering
\begin{overpic}[width=0.5\textwidth]{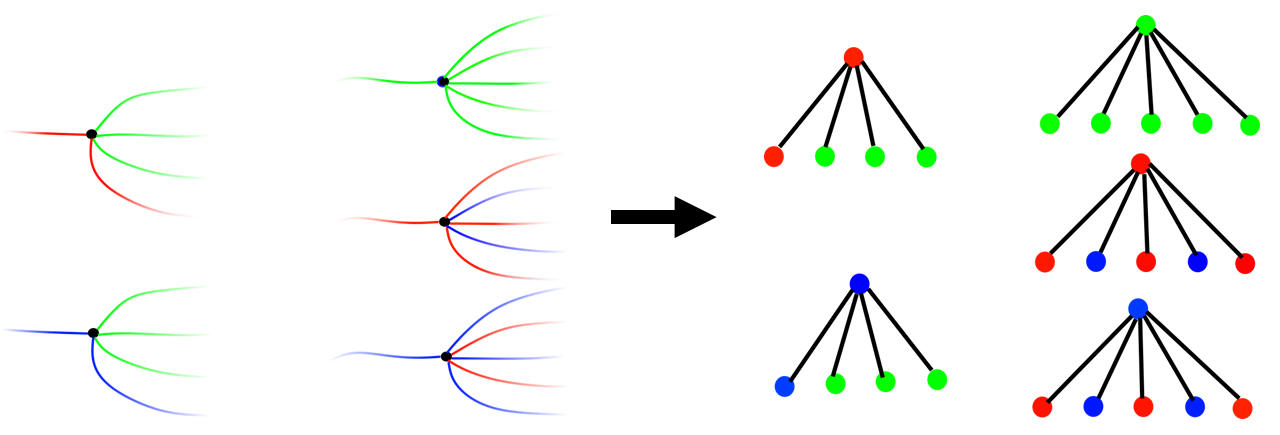}
\end{overpic}
\caption{\textit{The star representation of all possible $4$- and $5$-junctions in $d=4$, see also Figure \ref{rules4d}.}}
\label{project1}
\end{figure}

As stated above, we will refer the above projection of $G_{k,d}$ into the collection of colored graphs as the \textbf{star representation}. Before moving on, we remark this projection can be intuitively described as "interchanging colors and vertices". As will be made clear soon, the reason we do so is that by encoding the colors in vertices we can count bifurcation diagrams more easily - at least in some case. As we will see, this approach will allow us to answer negatively the question "can we characterize the general route to chaos" (Theorem \ref{treeth}). Later on, in Appendix \ref{block}, for the sake of completeness, we will discuss another projection: the \textbf{clique representation}.

\subsection{Trees}
\label{trees}
A \textbf{tree} is a graph in which every pair of distinct vertices is connected by exactly one path - or equivalently, a connected acyclic graph. A \textbf{forest} is a graph s.t. each component of it is a tree. The \textbf{degree of a tree} refers to the highest number of children, any node in the tree has:  a binary tree is degree $2$, a ternary tree is degree $3$ and an $m$-ary tree, $m>0$ is one where each node has no more than $m$ children. In this Subsection we will use trees to answer the following question - can we find an upper bound on the number of possible routes to chaos?

To motivate our approach, let $S$ be a closed, $d>1$-dimensional smooth manifold. By \cite{KY4}, given a $d$-dimensional cube $ABCD$ and a $C^1$ one-parameter family $f_t:ABCD\to\mathbb{R}^d$, $t\in[0,1]$ s.t. $f_0(ABCD)\cap ABCD=\emptyset$ and $f_1(ABCD)$ is a Horseshoe map, then, under certain natural constrains, all the periodic orbits in $ABCD$ arise through period-doubling cascades as in Figure \ref{cascades} (for the precise conditions, see Section 2 in \cite{KY4}). In this setting  every cascade corresponds to a colored tree while the bifurcation diagram of the entire one-parameter family -- to a colored forest. This example illustrates that one can think of colored trees with infinitely many vertices and colored forests with infinitely many components in $G_{k,d}$ as representing bifurcation diagrams in which "dynamical complexity can only be added". More precisely, following \cite{EY}, we say a $C^1$ one-parameter family $f_t:S\to S$, $t\in(0,1)$ in $F_k(S)$, $k\geq1$, satisfies the \textbf{monotonicity property in $A\times(0,1)$}, where $A$ is some open set in $S$, if, once a periodic of orbit arises in $A$ at some $t_0\in(0,1)$, it persists as a periodic orbit for $f_t$ in $A$ for all $t>t_0$. When this is the case, it is easy to see the subgraph of the bifurcation diagram $\Gamma\in B_k(S)$, describing the evolution of periodic dynamics in $A$, corresponds to some tree.

All in all, this discussion motivates us to think of trees as the part of the bifurcation diagram describing possible route to chaos. And indeed, as we show in the proof of Theorem \ref{existence}, given any forest which is a countable collection of trees $\Gamma\in G_{k,d}$ there exists an isotopy $f_t:S\to S$ of continuous maps with periodic orbits, together with an open set $A\subseteq S$ s.t. the following holds:
\begin{itemize}
    \item If $\{\Gamma_i\}$ are the trees composing the forest $\Gamma$, there exist open neighborhoods $\{A_i\}$ and periodic orbits $x_i\in A_i$ bifurcating as dictated by $\Gamma_i$ as we vary $t$ to $1$.
    \item As $t$ is varied towards $1$, $x_i$ bifurcates in a way that all the periodic orbits, arising from it, persist. In other words, even if there are other dynamics occurring in $A_i$, there is one collection of periodic orbits where the complexity can only increase as $t\to1$ (i.e., those created by bifurcating from $x_i$).
\end{itemize}

Before we proceed (and to better motivate the proof below), we recall a spanning tree of a connected graph is a subgraph which includes all the original graph’s vertices, and has no cycles. Moreover, if the collection of spanning trees includes all trees even if they are isomorphic as graphs but having different labels, then a graph isomorphism will induce an isomorphism on spanning trees and vice-versa. Hence, a graph may have exponentially many spanning trees. This shows that even in a graph-related context, trees are the most important object.

\begin{figure}[h]
\centering
\begin{overpic}[width=0.42\textwidth]{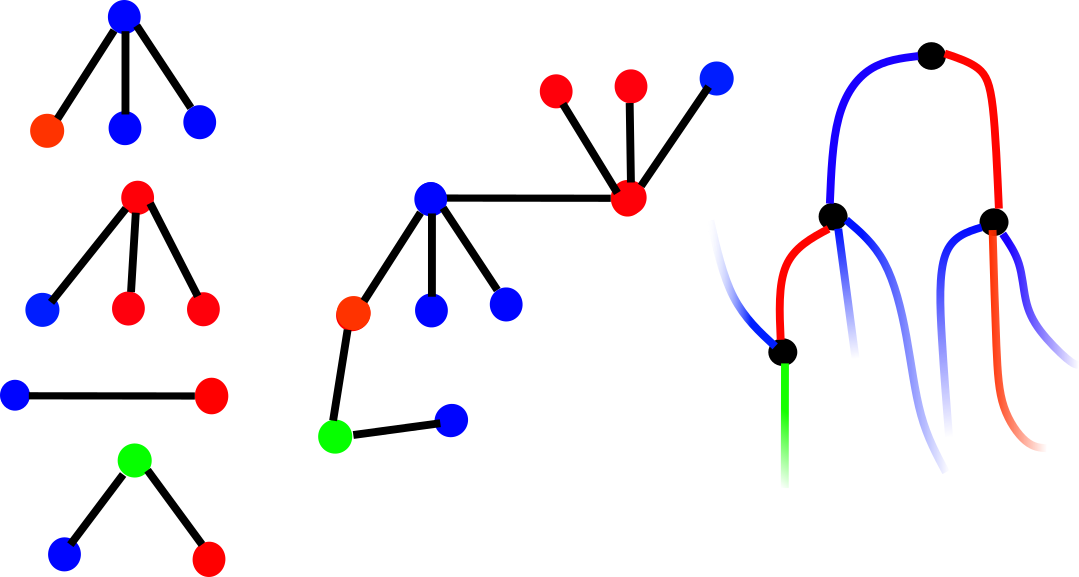}
\end{overpic}
\caption{\textit{On the left - a collection of (colored) star graphs. In the middle - a graph constructed by gluing vertices with the same color. On the right - the corresponding bifurcation diagram, which is admissible when $d>2$, see Proposition \ref{rules3}.}}
\label{glue}
\end{figure}

Let $G^A_{k,d}$ denote the collection of all possible connected acyclic diagrams from $G_{k,d}$ - i.e., the collection of all trees in $G_{k,d}$. From the above, if we consider $G_{k,d}$ as a "moduli space" of all possible admissible bifurcation diagrams in $d$-dimensional manifolds, $G^A_{k,d}$ intuitively corresponds to all possible routes to chaos that arise via admissible bifurcations, i.e., saddle-node, period-doubling, type $m\geq3$ and $j$-junctions, $k+1\geq j\geq 4$. To continue, for any natural number $n$ denote by $P^nG^A_{k,d}$ the set consisting of all possible connected acyclic diagrams of degree $k+1$ on $n$ vertices in the star representation of $G^A_{k,d}$. Finally, given $k>r>0$, by the \textbf{relative dimension of $G^A_{k,d}$ in $G^A_{r,d}$}, we will always mean the limit $\frac{|G^A_{k,d}|}{|G^A_{r,d}|} = \lim\limits_{n \rightarrow \infty} \frac{|P^nG^A_{k,d}|}{|P^nG^A_{r,d}|}$. In Theorem \ref{treeth} we will prove that for all $r>k$ and every dimension $d\geq4$, we always have $\frac{|G^A_{r,d}|}{|G^A_{k,d}|}=\infty$. But for that we first prove the following Lemma:

\begin{lemma}
\label{lemma1}If we remove the colors from the vertices of the trees in $P^nG^A_{k,d}$ with $d\geq4$, the resulting collection of graphs is all trees of degree $k+1$ on $n$ vertices.
\end{lemma}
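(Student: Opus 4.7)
The inclusion that every uncolored tree in $P^nG^A_{k,d}$ is automatically a tree of maximum degree at most $k+1$ on $n$ vertices is immediate from the definition of $P^n G^A_{k,d}$. The real content of the lemma is the reverse: every such tree admits at least one coloring realizing it as the star representation of some element of $G^A_{k,d}$. My plan is to prove this by explicit construction.

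Given $T$, I will root it at an arbitrary leaf and canonically decompose its edges into local bifurcation stars: each internal vertex $v$ with $c(v)\geq 2$ children becomes the center of one local star containing its children (a period-doubling for $c(v)=2$, a type-$3$ bifurcation for $c(v)=3$, an $n$-junction for $c(v)\geq 4$), and each edge from a parent with a unique child is treated as a saddle-node. Under this decomposition every vertex of $T$ sits in at most two local stars, which matches the structural requirement that each orbit in $\Gamma$ has two endpoints. I then propagate a coloring top-down from the root (set to $\phi=1$), invoking admissible rules from Propositions \ref{rules2}, \ref{rules3}, and \ref{rules4} at each vertex: for example, $1\to(1,0)$ and $0\to(-1,1)$ for period-doublings; $1\to(1,-1,1)$, $0\to(0,-1,1)$, and $0\to(0,0,0)$ for type-$3$ bifurcations; and the rules $\pm 1\to(0,\ldots,0,\pm 1)$ and, when $c(v)=3k$, $\pm 1\to(\mp 1,\ldots,\mp 1,\pm 1,\ldots,\pm 1)$ and $0\to(0,\ldots,0)$ for $n$-junctions.

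The main technical obstacle is that Proposition \ref{rules4} only admits an $n$-junction centered at a $0$-colored vertex when $c(v)=3k$. I must therefore prevent the construction from ever assigning color $0$ to a vertex $v$ with $c(v)\geq 4$ and $c(v)\not\equiv 0\pmod 3$. I will handle this by a look-ahead in the top-down propagation: at each parent I first flag the ``problematic'' children (those that would otherwise be left colored $0$ while having $c(v)\geq 4$ and $c(v)\not\equiv 0\pmod 3$) and preferentially assign them to the non-zero color slots available in the parent's rule; when there are more problematic children than such slots, I locally modify the decomposition at the parent by splitting off one problematic child via a saddle-node, reducing the central $n$-junction by one and pairing it with a saddle-node, still within the two-local-star budget at the parent. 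The extra type-$3$ rules granted by Proposition \ref{rules4} in dimension $d\geq 4$, in particular $0\to(0,0,0)$ and $0\to(0,-1,1)$, supply the coloring flexibility that makes this case analysis close.
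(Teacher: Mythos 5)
You are right that the substance of the lemma is the converse inclusion, and your identification of the index-$0$ obstruction (Proposition \ref{rules4} only permits a $0$-indexed orbit to undergo an $n$-junction when $n=3k$, splitting into $3k$ zeros) is actually more careful than the paper's own argument: the paper disposes of the lemma in two lines by reducing to $d=4$ and asserting that any orbit, regardless of its Orbit Index, can split into $j$ distinct orbits for every $1\le j\le k+1$ --- which is precisely the claim your ``main technical obstacle'' calls into question. So you have correctly located where the real work lies.

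However, your repair does not close the gap. First, the fallback move --- ``splitting off one problematic child via a saddle-node, reducing the central $n$-junction by one'' --- is not available at an internal vertex. A vertex of the star representation is an orbit, and an orbit has exactly two ends at which bifurcation events can occur; for an internal vertex one end is already consumed by being a product of its parent's bifurcation and the other by its own junction, so there is no third end at which to attach a saddle-node, and attaching one anyway alters the underlying tree rather than realizing the given $T$ (your ``two-local-star budget'' is already exhausted before the modification). Second, without that move the look-ahead can genuinely fail: a parent with $j\ge 4$, $j\not\equiv 0\pmod 3$ children forced into the rule $\pm1\to(0,\dots,0,\pm1)$ has only one nonzero slot, and nothing in your construction prevents two or more of its children from being problematic. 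The flexibility you are missing --- and which the paper also never makes explicit --- is exactly this two-endedness of orbits: a vertex with $j$ children need not be realized by a single $j$-junction, because its degree can be split between two separate bifurcation events at its two ends (for instance, a root with four children can be an orbit that period-doubles at both ends via $0\to(-1,1)$ twice, so that none of its children is forced to carry index $0$). A correct proof must either exploit this splitting systematically in the induction, or justify the paper's blanket assertion directly; as written, both your argument and the paper's leave the index-$0$ case open.
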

\begin{proof}
By Proposition \ref{rules4}, it would suffice to prove the assertion for $d=4$ - since by the said Proposition, for all $d>4$, $G^A_{k,d}=G^A_{k,4}$. As discussed earlier, when $d=4$ we can bifurcate a given periodic orbit in almost every way w.r.t. the Mallet-Yorke Index. In detail, taking the Mallet-Yorke Indices $0,1,-1$ as colors, we can split the periodic orbit in a bifurcation in almost every pattern (see Figure \ref{fig5}). By definition, any tree is the combination of star graphs via the following relation: a vertex of degree $i>1$ corresponds to $i$-star graph. It is therefore easy to see that due to Proposition \ref{rules4} we can always connect any admissible saddle-node, period-doubling or type $m \geq 3$ bifurcation diagram at any vertex with a bifurcation diagram of any needed degree. When we remove the colors, each of these bifurcations corresponds to a star graph. In the specific case of $G^A_{k,d}$ and the vertices of the colored graphs in it, this proves that when we remove the colors from the vertices of trees in $P^nG^A_{k,d}$ we get all tree of degree $k+1$ on $n$ vertices.
\end{proof}

Having proven Lemma \ref{lemma1}, we now use it to prove Theorem \ref{treeth} where we show that whenever $d\geq4$, the relative dimension of $G^A_{r,d}$ in $G^A_{k,d}$ is infinite. In other words, there is probably no upper bound for the possible complexity of chaos in high dimensions.

\begin{theorem} \label{treeth} For any natural $k>1$ and $d\geq4$, we have $\frac{|G^A_{k+1, d}|}{|G^A_{k,d}|} = \frac{|G^A_{k, d}|}{|G^A_{2,d}|} = \frac{|G^A_{k,d}|}{|G^A_{1,d}|} = \infty$.
\end{theorem}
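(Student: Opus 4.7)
The plan is to reduce the comparison of the sizes of the colored tree sets $G^A_{k,d}$ and $G^A_{r,d}$ to a purely enumerative gap between the numbers of uncolored trees of bounded maximum degree, and then invoke a classical tree-enumeration argument together with Lemma \ref{lemma1} to transfer the gap back to the colored setting.

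First I would invoke Lemma \ref{lemma1}: forgetting the edge colors in a diagram from $P^n G^A_{k,d}$ gives a surjection onto the set $\mathcal{T}_{k+1}(n)$ of all trees on $n$ vertices whose maximum vertex degree is at most $k+1$. Conversely, by Proposition \ref{rules4}, in dimensions $d \geq 4$ every orbit, regardless of its Orbit Index, admits each of the admissible splittings, so each uncolored tree of degree $\leq k+1$ lifts to at least one admissible colored diagram in $P^n G^A_{k,d}$. Moreover, the number $\mu(T)$ of admissible colorings of a given $T$ factors as a product of local per-vertex contributions, each drawn from a finite list whose length depends only on $k$ and on the Orbit Index entering the vertex. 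This yields
\[
|\mathcal{T}_{k+1}(n)| \;\leq\; |P^n G^A_{k,d}| \;\leq\; C(k)^{n}\,|\mathcal{T}_{k+1}(n)|
\]
for a finite constant $C(k)$ depending only on $k$ and $d$.

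Second, I would establish the elementary combinatorial fact that for every fixed $d \geq 2$,
\[
\lim_{n \to \infty} \frac{|\mathcal{T}_{d+1}(n)|}{|\mathcal{T}_d(n)|} \;=\; \infty.
\]
Analytically this follows from Otter-type singularity analysis: the generating function for rooted trees of maximum degree at most $d$ satisfies a functional equation of the form $R_d(x) = x\,\mathrm{MSET}_{\leq d}(R_d(x))$, and its dominant singularity $\rho_d$ is strictly decreasing in $d$, so $|\mathcal{T}_d(n)|$ grows like $\rho_d^{-n} n^{-5/2}$ with base $1/\rho_d$ strictly increasing in $d$. A purely elementary alternative would inject $\mathcal{T}_d(n-d-1)$ into $\mathcal{T}_{d+1}(n) \setminus \mathcal{T}_d(n)$ by attaching a fresh $(d+1)$-star at a distinguished vertex, and multiply by the $\Omega(n)$ choices of attachment point to secure a divergent ratio. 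Applying this gap with $d = k+1$ yields $|G^A_{k+1,d}|/|G^A_{k,d}| = \infty$; telescoping from $d = 2$ up to $d = k+1$ gives $|G^A_{k+1,d}|/|G^A_{2,d}| = \infty$; and for the third ratio it suffices to observe that $|\mathcal{T}_2(n)|$ counts only paths, while $|\mathcal{T}_{k+1}(n)|$ is already unbounded for $k > 1$.

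I expect the main obstacle to lie in controlling the coloring multiplicity $\mu(T)$ of Step~1. A priori, if $\mu(T)$ grew exponentially with a base depending badly on the degree profile of $T$, it could absorb or even invert the tree-count gap of Step~2. The resolution is that Proposition \ref{rules4} makes the list of admissible local colorings at each vertex short and degree-dependent only in a uniformly bounded way, so $\mu(T)$ factors as a product of uniformly bounded local factors. This ensures that the ratio $\mu(T)/\mu(T')$ between trees of comparable size in $\mathcal{T}_{k+1}(n)$ and $\mathcal{T}_{k+2}(n)$ remains bounded away from $0$ and $\infty$, so the superexponential tree-count gap of Step~2 transfers faithfully to the colored diagrams and yields all three asserted equalities.
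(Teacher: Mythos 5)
Your proposal follows the same overall route as the paper: apply Lemma \ref{lemma1} to forget colors, sandwich $|P^nG^A_{k,d}|$ between the number of uncolored bounded-degree trees on $n$ vertices and that number times an exponential coloring factor, and then argue that the growth of uncolored tree counts in the degree bound forces the ratio to diverge. The decisive step, however, is not established. Your sandwich gives
\[
\frac{|P^nG^A_{k+1,d}|}{|P^nG^A_{k,d}|}\;\ge\;\frac{|\mathcal{T}_{k+2}(n)|}{C(k)^n\,|\mathcal{T}_{k+1}(n)|},
\]
and the tree-count gap $|\mathcal{T}_{k+2}(n)|/|\mathcal{T}_{k+1}(n)|$ is \emph{exponential}, not superexponential: by your own Otter-type asymptotics it behaves like $(\rho_{k+1}/\rho_{k+2})^n$ up to polynomial corrections, with a finite base that in fact tends to $1$ as $k\to\infty$. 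So everything hinges on comparing two exponentials, $(\rho_{k+1}/\rho_{k+2})^n$ against $C(k)^n$, and nothing in your argument shows the first base beats the second. Your proposed resolution --- that $\mu(T)/\mu(T')$ stays ``bounded away from $0$ and $\infty$'' because the per-vertex factors are uniformly bounded --- is false: a product of $n$ bounded local factors can differ between two trees by a factor exponential in $n$ (compare a tree admitting three colorings at every vertex with a rigidly colored one; the ratio is $3^n$). The paper does not evade this difficulty structurally; it attacks it head-on with an explicit Stirling computation of $N(k+1)/(3^nN(k))$, and that computation is exactly the piece your write-up replaces with an assertion. Your ``elementary alternative'' in Step 2 is weaker still: attaching a fresh star at one of $\Omega(n)$ vertices yields only a polynomially divergent lower bound on the tree-count ratio, which cannot absorb any exponential coloring factor.

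Two further mismatches with the paper are worth flagging. First, the paper's Lemma \ref{lemma1} and the proof of Theorem \ref{treeth} work with $k$-ary (ordered, Catalan-type) trees, for which the exact count $\frac{1}{(k-1)n+1}\binom{kn}{n}$ is available and the exponential growth constant $k^k/(k-1)^{k-1}$ is explicit; you work with unordered trees of bounded maximum degree, whose growth constants are different (and smaller), so even your comparison of $G^A_{k,d}$ with $G^A_{1,d}$ --- where $|\mathcal{T}_2(n)|=1$ but the colored diagrams over a path can number up to $3^n$ --- is not settled by the observation you make. Second, the paper derives all three asserted equalities from the single limit $\lim_n |P^nG^A_{k,d}|/|P^nG^A_{k-1,d}|=\infty$ applied for each $k>1$, rather than treating the third ratio by a separate path-counting remark. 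To repair your argument you would need either an exact or asymptotic count of the \emph{colored} trees themselves (so that the coloring multiplicity is computed rather than bounded), or a proof that $\rho_{k+1}/\rho_{k+2}>C(k)$ for the specific constant your coloring bound produces.
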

\begin{proof}
To begin, we first recall the formula for the number of $k$-ary (uncolored) trees with $n$ vertices \cite{tree}:
\begin{equation*}
    \frac{1}{(k-1)n+1} \binom{kn}{n} =\frac{1}{(k-1)n+1} \frac{(kn)!}{((k-1)n)!n!}.
\end{equation*}

By Lemma \ref{lemma1} we know that $P^nG^A_{k-1, d}$, after forgetting colors, is the set of all trees of degree $k$ on $n$ vertices - therefore, since every vertex is colored by one of three colors we conclude $|P^nG^A_{k-1,d}|\leq 3^n N(k)$, where $N(k)$ is the number of $k$-ary trees with $n$ vertices. This yields the following inequality:

\begin{equation*}
    \frac{|P^nG^A_{k,d}|}{|P^nG^A_{k-1,d}|}\geq\frac{N(k+1)}{3^nN(k)}.
\end{equation*}
By using the Stirling approximation (i.e. $n! \sim n^{n+1/2} e^{-n}$; $(1 \pm 1/a)^b \sim e^{\pm b/a}$), we now get:

\begin{equation*}
    \frac{N(k+1)}{3^nN(k)}\approx \frac{(k+1)^{(k+1)n} (k-1)^{(k-1)n}}{3^nk^{2kn}}=\frac{1}{3^n} (1-\frac{1}{k^2})^{(k-1)n} (\frac{k+1}{k})^{2n}.
\end{equation*}
We now note that by $k>1$ we have $(1-\frac{1}{k^2})^{(k-1)n}>\frac{1}{2^{(k-1)n}}$, therefore we conclude:

\begin{equation*}
    \frac{N(k+1)}{3^nN(k)}\approx \frac{1}{3^n} (1-\frac{1}{k^2})^{(k-1)n} (\frac{k+1}{k})^{2n}\geq\frac{1}{6^{(k-1)n}}(\frac{k+1}{k})^{2n}.
\end{equation*}

Write $c=6^{k-1}$ and note $c^n = (\frac{k+1}{k})^{n\log_c(\frac{k+1}{k})}$. Consequently, $\lim\limits_{n\to\infty}(\frac{k+1}{k})^{2n}c^{-n}=\infty$, which implies $$\frac{|G^A_{k,d}|}{|G^A_{k-1,d}|}=\lim_{n\to\infty}  \frac{|P^nG^A_{k,d}|}{|P^nG^A_{k-1,d}|}=\infty.$$
\end{proof}
\begin{remark}
    One particular implication of Theorem \ref{treeth} is that $\frac{|G^A_{3,d}|}{|G^A_{2,d}|}=\infty$ for all $d\geq4$ - i.e., intuitively, there are much more routes to chaos involving type $m\geq3$ bifurcations than there are only period-doubling routes to chaos.
\end{remark}
Having studied the case where the dimension $d\geq4$, we are now led to ask: do these results also hold for lower-dimensional systems? In other words, assuming $d=2,3$, given $k>r\geq1$ do we still have $\frac{|G^A_{k},d|}{|G^A_{r,d}|}=\infty$? Unfortunately, when $d<4$ there is no clear analogue of Lemma \ref{lemma1}, so one cannot perform approximation as above. To better illustrate this, we note, based on Propositions \ref{rules2}, \ref{rules3}, that the following occurs:
\begin{itemize}
    \item $\cup_{k, n>0} P^nG^A_{k,2}$ does not include all binary and ternary trees (after forgetting the colors).
    \item $\cup_{k, n>0}P^n G^A_{k,3}$ includes all binary trees, but not all $4$-ary trees (again, after forgetting the the colors).
\end{itemize}

That being said, surprisingly, when $d=2,3$ the relative dimensions are dominated, in a sense, by higher-dimensional bifurcation diagrams. To make this statement precise, we make the following computation:

\begin{equation*}
    \frac{|P^nG^A_{k+1,d}|}{|P^nG^A_{k,d}|} = \frac{|P^nG^A_{k+1,d}|}{|P^nG^A_{k,d}|} \frac{|P^nG^A_{k+1, d+1}|}{|P^nG^A_{k+1,d+1}|} = \frac{|P^nG^A_{k+1,d}|}{|P^nG^A_{k+1,d+1}|} \frac{|P^nG^A_{k+1, d+1}|}{|P^nG^A_{k,d}|}.
\end{equation*}

By Proposition \ref{rules2} and Proposition \ref{rules3} we have the following two inequalities:

\begin{equation*}
    \frac{|P^nG^A_{k+1, d+1}|}{|P^nG^A_{k,d}|} \geq \frac{|P^nG^A_{k+1, d+1}|}{|P^nG^A_{k,d+1}|} \geq 1 \text{ and }
    0<\frac{|P^nG^A_{k+1,d}|}{|P^nG^A_{k+1,d+1}|}\leq 1.
\end{equation*}
When $d=3$, we already know $\lim\limits_{n\to\infty} \frac{|P^nG^A_{k+1, d+1}|}{|P^nG^A_{k, d+1}|}=\infty$, which shows that the quantity $\frac{|G^A_{k+1,d}|}{|G^A_{k,d}|}=\lim\limits_{n\to\infty} \frac{|P^nG^A_{k+1,d}|}{|P^nG^A_{k,d}|}$ depends on $\lim\limits_{n\to\infty}\frac{|P^nG^A_{k+1,d}|}{|P^nG^A_{k+1, d+1}|}$. From now on, for $d=2,3$ and $k>0$ we denote $\lim\limits_{n\to\infty}\frac{|P^nG^A_{k+1,d}|}{|P^nG^A_{k+1,d+1}|}=\frac{|G^A_{k,d}|}{|G^A_{k,d+1}|}$, and refer to this as the \textbf{relative share of $G^A_{k, d}$ inside $G^A_{k,d+1}$} - by definition, $\frac{|G^A_{k,d}|}{|G^A_{k,d+1}|}\leq1$. Much like how the relative dimension measures the growth rate of the different possibilities for different routes to chaos, the relative share measures how much of the routes to chaos in dimension $d+1$ are essentially $d$-dimensional. Recalling $\frac{|G^A_{k+1,4}|}{|G^A_{k,4}|}=\infty$ for all $k>0$ (see Theorem \ref{treeth}), these calculations imply the following:

\begin{corollary}
\label{dim3} Assume $d=2,3$. Given any natural number $k>0$, if the relative dimension $\frac{|G^A_{k+1,d}|}{|G^A_{k,d}|}$ is finite, then the relative share $\frac{|G^A_{k+1,d}|}{|G^A_{k+1,d+1}|}$ is also finite.
\end{corollary}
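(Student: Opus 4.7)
The plan is to exploit the factorization identity introduced just before the corollary, but with dimension $4$ serving as the common upper bound rather than the neighboring dimension $d+1$. First I would write
\begin{equation*}
\frac{|P^nG^A_{k+1,d}|}{|P^nG^A_{k+1,4}|} \;=\; \frac{|P^nG^A_{k+1,d}|\,/\,|P^nG^A_{k,d}|}{|P^nG^A_{k+1,4}|\,/\,|P^nG^A_{k,d}|}.
\end{equation*}
Under the hypothesis that the relative dimension $|G^A_{k+1,d}|/|G^A_{k,d}|$ is finite, the numerator stays bounded as $n\to\infty$, so the whole argument reduces to proving that the denominator diverges.

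The first step towards that is to record the inclusion chain $G^A_{k,2}\subseteq G^A_{k,3}\subseteq G^A_{k,4}$, which is a direct consequence of Propositions \ref{rules2}--\ref{rules4}: every bifurcation law admissible in dimension $d$ is also admissible in dimension $d+1$, since one can always append a trivial multiplier in $(0,1)$ without disturbing the Orbit Index or the type of any orbit. Consequently $|P^nG^A_{k,d}|\leq |P^nG^A_{k,4}|$ for $d\in\{2,3\}$, which yields
\begin{equation*}
\frac{|P^nG^A_{k+1,4}|}{|P^nG^A_{k,d}|} \;\geq\; \frac{|P^nG^A_{k+1,4}|}{|P^nG^A_{k,4}|}.
\end{equation*}

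The second step is to invoke Theorem \ref{treeth} in dimension $4$. It guarantees that the right-hand side above diverges to $\infty$ as $n\to\infty$ for every $k\geq 1$: when $k>1$ this is the assertion $|G^A_{k+1,4}|/|G^A_{k,4}| = \infty$, while the case $k=1$ is covered by the identity $|G^A_{2,4}|/|G^A_{1,4}| = \infty$ (both appear explicitly in the statement of Theorem \ref{treeth}). Combining the two steps, the numerator in the factorization above stays bounded while the denominator tends to infinity, so the quotient itself tends to $0$. That is exactly the assertion $|G^A_{k+1,d}|/|G^A_{k+1,4}| = 0$ in the sense of relative share.

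The only point that genuinely requires care is the inclusion $G^A_{k,d}\subseteq G^A_{k,d+1}$: although it is remarked informally in the paragraphs introducing Propositions \ref{rules3} and \ref{rules4}, a fully rigorous write-up should verify that appending an eigenvalue in $(0,1)$ to the differential of every orbit leaves the Orbit Index, the type of the orbit, and hence the entire colored diagram unchanged, so that every element of $G^A_{k,d}$ embeds into $G^A_{k,d+1}$ as a colored graph. Beyond this bookkeeping the proof is essentially a one-line division trick leveraging Theorem \ref{treeth}, and I do not expect any further obstacles.
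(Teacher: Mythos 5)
Your proposal is correct and follows essentially the same route as the paper: write the relative share as a quotient of ratios, observe the numerator is bounded by the hypothesis, and use the inclusions $G^A_{k,d}\subseteq G^A_{k,4}$ (from Propositions \ref{rules3} and \ref{rules4}, since appending multipliers in $(0,1)$ changes neither types nor indices) together with Theorem \ref{treeth} to force the denominator to diverge. If anything, your write-up is slightly cleaner than the paper's exposition, because the paper's intermediate computation phrases the inequalities in terms of $d+1$ -- which equals $4$ only when $d=3$ -- while you anchor both $d=2$ and $d=3$ directly at dimension $4$, making the $d=2$ case explicit rather than leaving it to the reader. One correct extra care you took is covering $k=1$ via the identity $|G^A_{2,4}|/|G^A_{1,4}|=\infty$, since Theorem \ref{treeth} is stated for $k>1$.
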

In other words, the growth rates of routes to chaos in dimension $3$ depends on how many bifurcation scenarios in dimension $4$ are essentially three-dimensional, i.e., can be realized along a three-dimensional $C^1$ one-parameter family of diffeomorphisms. At this point we remark that something similar happens in dimension $2$ with respect to dimension $1$ - but to introduce it, we first have to extend our setting. To do so, recall the Mallet-Yorke Index can be extended to periodic orbits in dimension $1$ (see, for example, \cite{EY}). However, as smooth one-dimensional isotopies $f_t:\mathbb{R}\to \mathbb{R}$, where $t\in[0,1]$, cannot have routes to chaos (or non-trivial dynamics, in general), in order to get meaningful results one has to replace these isotopies with $C^1$-homotopies.

\begin{figure}[h]
\centering
\begin{overpic}[width=0.42\textwidth]{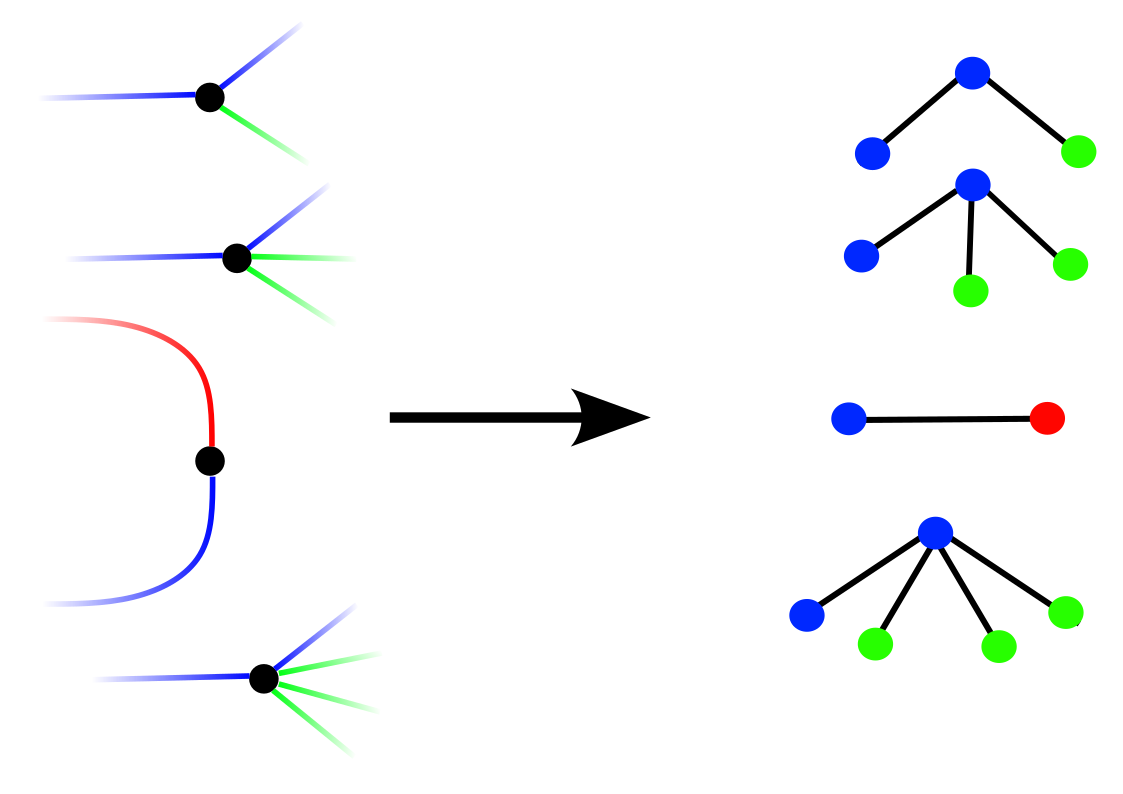}
\end{overpic}
\caption{\textit{On the left - admissible bifurcations. On the right - their star representations.}}
\label{onedim}
\end{figure}

In this new setting, one can define the Mallet-Yorke Index of a periodic point $y$ in a similar way as in higher dimensions. However, in this setting the only admissible bifurcations are saddle-node, period-doubling, and $n$-junctions modeled on them (see the illustration in Figure \ref{onedim}). This implies that given any $k>0$, by analogously defining $G^A_{k,1}$, we always have $G^A_{k,1}\subseteq G^A_{k,2}$. Or, in other words, every one-dimensional bifurcation diagram is also an admissible two-dimensional diagram. In particular, as there are no type $m$ bifurcations in $G^A_{1,2}$, we have $G^A_{1,1}=G^A_{1,2}$.

Heuristically speaking, the dynamics of two-dimensional maps are essentially those of "stretched out" one-dimensional maps (see, for example, \cite{Bo} for a survey of these ideas). Moreover, it is also known that four-dimensional flows can have three-dimensional subsystems - which in turn can have two-dimensional discrete time subsystems with chaotic behavior (with the restricted three body problem being one such famous example - see \cite{LM}). We believe the relative dimension and relative share possibly explain why we encounter such phenomena in two- and three-dimensional systems. Motivated by these facts, we ask the following: 

\begin{Question}
    \label{dimconj1} For all $k>1$, is the relative share $\frac{|G^A_{k,1}|}{|G^A_{k,2}|}$ necessarily finite? And, conversely, can the relative shares $\frac{|G^A_{k,2}|}{|G^A_{k,3}|}$ and $\frac{|G^A_{k,3}|}{|G^A_{k,4}|}$ be non-zero? 
    \end{Question}
At present, we can only refer to Section 5 in \cite{san2}, where routes to chaos excluding period-doubling cascades were conjectured to be atypical in dimensions $2$ and up (see \cite{san2} for the precise formulation). If that conjecture holds, then its proof could possibly shed some light on the answer to the question above. Another possible related fact is Theorem 2.7.II from \cite{AB}, which proves certain types of period-doubling cascades can occur only for certain class of four-dimensional flows, but not for three-dimensional flows. That being said, despite these results and conjectures, at present we do not know which answer should be expected - as stated at the Introduction, some numerical evidence from \cite{Gallas} and \cite{Tur} as well as the theoretical evidence from \cite{AS} and \cite{GuW} could equally be interpreted as evidence in the opposite direction.

Continuing this line of thinking, before concluding this Subsection, we recall that it is possible to convert any uncolored $m$-ary tree to an uncolored binary tree - which, intuitively, is like saying all trees in $G^A_{k,d}$, $k>1$ can be converted to trees in $G^A_{1,d}$. In detail, recall that to form a binary tree from an arbitrary $m$-ary tree one needs to do the following: 
\begin{enumerate}
    \item The root of the tree is made the root of the binary tree.
   \item Starting with the root, each node’s leftmost child in the tree is made its left child in the binary tree. 
    \item Any subsequent children of that node in the tree are treated as siblings. The second child becomes the right child of the first child, the third child becomes the right child of the second child, and so on.
\end{enumerate}

This combinatorial argument tells us that even if for all $d>0$ we have $\frac{|G^A_{k,d}|}{|G^A_{1,d}|}=\infty$, we should still expect period-doubling routes to chaos to play a prominent role. This should be contrasted with Section 6 in \cite{EY} and  Corollary 6.5, Theorems 1, 2 in \cite{san2}, which, briefly speaking, state that period-doubling routes to chaos are the "general case" for large classes of high-dimensional one-parameter families. Assuming this to be the general case, these results could possibly be interpreted in light of the graph-theoretic argument above, namely, that routes to chaos involving periodic orbits are essentially binary trees with some "extra" information. That being said, by Theorem \ref{treeth}, that "surplus" information could potentially include a lot more extra data.  

\section{Discussion}

Before we conclude this paper, we would like to say a few words on how this work can possibly be continued. Despite Theorem \ref{treeth} being applicable only for trees, we believe one can use Graph Theory to describe "universal" properties of bifurcation diagrams. That being said, provided these "universal" properties are shown to exist, they can probably shed light on the universal mechanisms governing the evolution from order into chaos.

Let us recall the \textbf{Rado graph} (the said graph arises via the Erdős–Rényi model of a random graph on countably many vertices \cite{erdos}), which can be characterized in a deterministic way by the next properties \cite{rado}:
\begin{itemize}
    \item \textbf{Universality} - it has countably many vertices, and any graph with countably many vertices is isomorphic to an induced subgraph of the Rado graph.
    \item \textbf{Homogeneity} - any isomorphism between finite induced subgraphs can be extended to an isomorphism between the Rado graph to itself.
    \item \textbf{Robustness} - removing any finite set of its vertices and edges produces a graph isomorphic to the whole Rado graph
\end{itemize}

The above properties define the Rado graph uniquely up to isomorphism. The universality of the Rado graph can be extended to edge-colored graphs - or in our notation, to some conjectured universal "bifurcation diagram", realizing all possible bifurcation diagrams. Moreover, the first-order logic sentences ($0-1$ laws) that are true for the Rado graph are also true of almost all random finite graphs - while the sentences that are false for the Rado graph are also false for almost all finite graphs (for the precise details, see \cite{logic}). 

We conjecture that something similar happens in Dynamical Systems. To elaborate, we recall the \textbf{Theorem of }\textbf{Multiversal chaos}, proven in \cite{Den2}. That theorem states the existence of a dynamical system $R:X\to X$ (where $R$ is a renormalization operator - see \cite{Den2}) s.t. the following holds:
\begin{itemize}
    \item The periodic orbits for $R$ are dense in $X$.
    \item if $g:S\to S$ is a dynamical system on some finite-dimensional set $S$ (deterministic or stochastic), then there exist infinitely many invariant sets $X'\subseteq X$ s.t. $R|_{X'}$ is conjugate to $g:S\to S$
\end{itemize}

In light of this result, we conjecture a bifurcation analogue of the Theorem of Multiversal chaos - namely, that much like all dynamical systems are governed by a fixed rule (i.e., $R:X\to X$), so do all bifurcations. Specifically, we conjecture the following:

\begin{Conjecture}
\label{univer1}    There exists a homotopy $R_t:X\to X$, $t\in[0,1]$ s.t. the following is true:
    \begin{itemize}
        \item $R_1=R$, while $R_0$ has "simple" dynamics (for example, a finite number of periodic orbits). In particular, the bifurcation diagram $\frak{R}$ of this curve is well-defined.
        \item Let $g_t:S\to S$, $t\in[0,1]$, be a $C^1$-isotopy of some closed, smooth manifold $S$ of dimension $d>1$, and let $\Gamma_g$ denote the bifurcation diagram of $g_t$. Then, $\Gamma_g$ can be embedded as a subgraph of $\frak{R}$.
    \end{itemize}
    \end{Conjecture}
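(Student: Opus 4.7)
The plan is to combine three ingredients: the Theorem of Multiverse chaos from \cite{Den2}, the universality and homogeneity of the Rado graph, and Theorem \ref{existence} from Appendix \ref{everygraph}, which already guarantees that every connected colored graph in $G_{k,d}$ arises as a subgraph of some bifurcation diagram. First I would fix the ambient space $X$ on which the renormalization operator $R$ of \cite{Den2} acts, and choose a reference map $R_0 : X \to X$ with simple dynamics, for instance a Morse--Smale-type map with finitely many hyperbolic periodic orbits whose multipliers avoid roots of unity. The homotopy $R_t$ itself would then be built by an explicit interpolation between $R_0$ and $R$ (e.g.\ convex combinations performed in local charts around each incoming bifurcation) arranged so that only the admissible bifurcations of Section \ref{laws} occur along $t \in [0,1]$. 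This would ensure that the bifurcation diagram $\mathfrak{R}$ of $R_t$ is well-defined as a (possibly infinite) colored graph consistent with the rules defining $G_{k,d}$.

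The second step is the embedding property. Given any $C^1$-isotopy $g_t : S \to S$, for every fixed $t_0 \in [0,1]$ the Multiverse chaos property of $R$ produces an invariant set $X'_{t_0} \subseteq X$ on which $R$ is conjugate to $g_{t_0}$. Since $R_t \to R$ as $t \to 1$, a structural stability / perturbation argument should transfer these conjugacies to $R_t$ for $t$ close to $1$, producing a family $\{X'_t\}$ of invariant sets for $R_t$ on which the restricted dynamics are conjugate to $g_t$. Each bifurcation of $g_t$ would then lift to a bifurcation of $R_t$ taking place inside $X'_t$, giving an embedding of $\Gamma_g$ into $\mathfrak{R}$ as a colored subgraph. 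Combined with Theorem \ref{existence}, this would complete the proof that $\mathfrak{R}$ is a universal bifurcation diagram.

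The hard part, and main obstacle, is precisely the coherent selection of the family $\{X'_t\}$. Multiverse chaos guarantees that an individual map $g : S \to S$ is conjugate to the restriction of $R$ to infinitely many invariant subsets, but it does not assert that these conjugacies can be stitched together continuously as $g$ varies in a one-parameter family, and a crude pointwise selection would generally fail to correspond to any actual connected family of periodic orbits of $R_t$. To bypass this, I would try to equip $\mathfrak{R}$ with a Rado-type structure on its vertex set of bifurcation points: universality would follow from the classical extension property of the Rado graph, while homogeneity would allow partial embeddings of $\Gamma_g$ up to parameter $t_0$ to be extended by graph automorphisms to $t_0 + \epsilon$. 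The technical heart of the argument would then be showing that the colored, locally-constrained analogue of the Rado graph dictated by Propositions \ref{rules2}--\ref{rules4} still satisfies the extension property; proving an Erd\H{o}s--R\'enyi-style $0$--$1$ law in this constrained category, in the spirit of \cite{logic}, is where I expect the main difficulty to lie.
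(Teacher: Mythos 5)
The statement you are trying to prove is stated in the paper as Conjecture \ref{univer1}, and the authors explicitly say of it that ``at present we do not know how to prove them.'' So there is no proof in the paper to compare against, and your proposal should be judged as a research program rather than checked against a reference argument. As a program it is reasonable in outline, but it is not a proof, and you yourself flag the decisive gap: the Theorem of Multiverse chaos gives, for each \emph{fixed} map $g$, invariant sets $X'\subseteq X$ with $R|_{X'}$ conjugate to $g$, and nothing in that statement controls how these sets vary as $g$ varies in a one-parameter family. Your proposed fix --- transferring the conjugacies from $R$ to $R_t$ for $t$ near $1$ by ``structural stability'' --- does not address this, because the bifurcations of $g_t$ occur as the isotopy parameter of $g$ varies, while the conjugacies you invoke are all to the single map $R=R_1$; there is a mismatch between the parameter of the family $g_t$ and the homotopy parameter of $R_t$, and no mechanism is given for realizing a bifurcation of the family $\{g_t\}$ as a bifurcation occurring along the curve $\{R_t\}$. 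Moreover, the invariant sets produced by the Multiverse theorem are not asserted to be hyperbolic or persistent, so the perturbation step has no foundation.

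Two further points would need to be established before the sketch could become a proof. First, the well-definedness of $\frak{R}$ is not automatic: the bifurcation diagrams of Definition \ref{bifurcationgraph} and the coloring rules of Propositions \ref{rules2}--\ref{rules4} are defined for $C^3$-isotopies of closed finite-dimensional manifolds via eigenvalue counts of $D_{f^n}(x)$, whereas the renormalization operator $R$ of \cite{Den2} acts on a space $X$ for which none of this machinery is known to apply; you would have to either redefine the Orbit Index in that setting or show $X$ admits the required structure. Second, your final step --- that the colored, locally constrained analogue of the Rado graph still satisfies the extension property --- is essentially a restatement of the universality claim being conjectured, not a reduction of it to something known; the $0$--$1$ law of \cite{logic} concerns unconstrained random graphs, and constrained colored classes (here constrained by Theorem \ref{indexinvariance} and the bifurcation laws) need not be Fra\"iss\'e classes with the amalgamation property. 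Until that amalgamation/extension property is proved for the class $G_{k,d}$, the conjecture remains open.
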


Provided Conjecture \ref{univer1} can be proven, its implications would be that there exists a universal theory of bifurcations for $C^1$ one-parameter families. More specifically, it would show that in order to study all possible bifurcations, one has to study the graph $\frak{R}$ (although in many ways, this could prove much more difficult then studying separately some given $C^1$-isotopy $g_t:S\to S$, $t\in[0,1]$).

Before we conclude this discussion, we remark that we have not touched upon the question of whether our results can be used to study flows generated by differential equations. As our results are strongly dependent on the notion of the Mallet-Yorke Index which was originally introduced to study persistent periodicity for flows in \cite{PY}. As such, intuitively, one would expect the answer to be true. Moreover, our results easily generalize to $C^1$-curves of suspension flows generated by suspending $C^1$-isotopies $g_t:S\to S$, $t\in[0,1]$. However, when one is given a $C^1$-curve of vector fields, $\dot{s}=F_t(S)$, $t\in[0,1]$, $s\in M$ (where $M$ is some closed smooth manifold of dimension at least $3$), the problem of describing the bifurcation diagram becomes much harder - mostly due to the involvement of fixed points, which allow bifurcations not happening in $F_k(S)$, $k>0$ - see, for example, \cite{PY2} for a survey of these ideas.

Finally, we would like to state we believe there are additional ways to apply Graph Theory to study genericity in bifurcation diagrams. For example, consider the notion for the limit of a sequence of graphs, also known as \textbf{graphons} (see \cite{lov}). It tells us that graphs themselves can be treated as measurable functions on $[0, 1]^2$, essentially by drawing their adjacency matrices. This allows for the treatment of graphs and random graphs as the same kind of object and analyze the limiting behavior of the sequence by considering the limiting behavior of the functions. We think the study of such objects can lead to new results about relative dimensions and generic bifurcation phenomena.

\section{Appendix - Block graphs}
\label{block}

Similarly to the star representation described at the beginning of Section \ref{tographs}, in this Appendix we also define and discuss analogous projection of $G_{k,d}$ using complete graphs, the \textbf{clique representation}. We define this as follows:

\begin{itemize}
\item Given "building blocks" of graphs, like saddle-node, period-doubling, type $m \geq 3$ bifurcations and $n\geq4$-junction, we interchange colors with vertices exactly as before - yet, this time we represent each such "building block" as complete graphs, see Figure \ref{completed}. Note here that $2$-star graph is a complete graph, and that in both cases it is the projected image of a saddle-node bifurcation orbit.
\item Given $\Gamma\in G_k(S)$, we project the collection of "bifurcation laws" defining it to the complete graphs, which we then glue at the corresponding vertices, see Figure \ref{completed}.
\end{itemize}

\begin{figure}[h]
\centering
\begin{overpic}[width=0.4\textwidth]{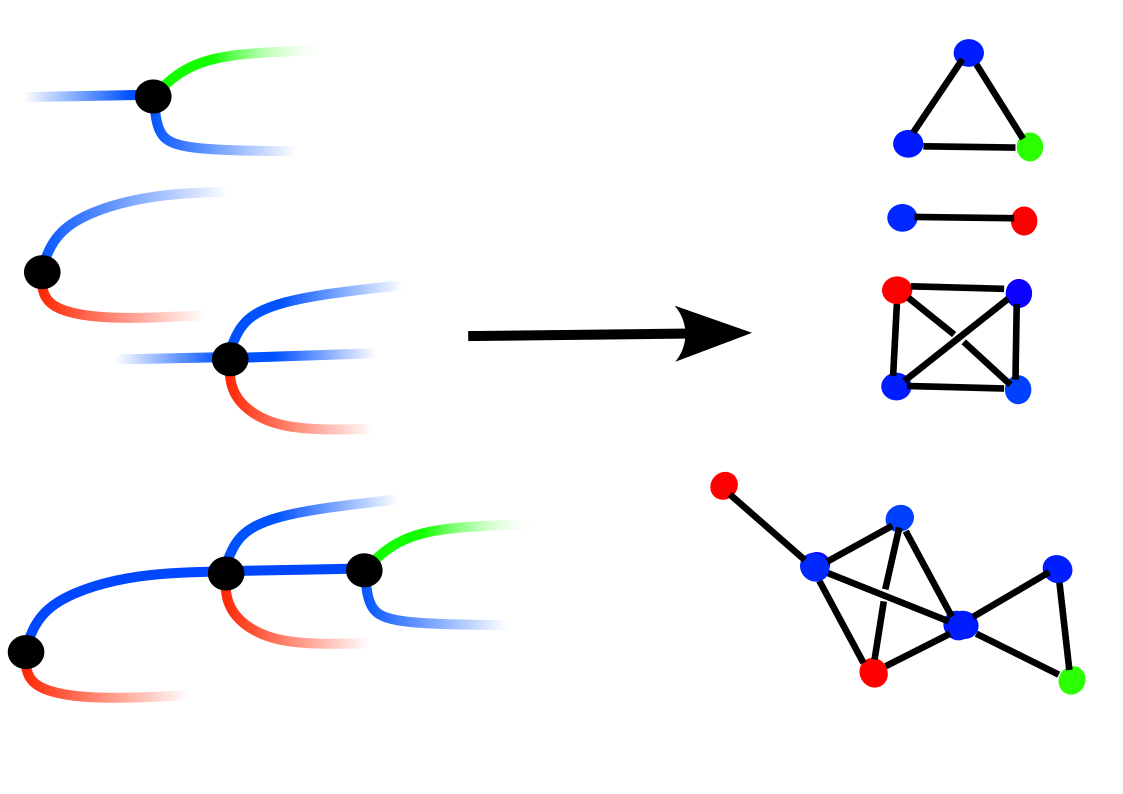}
\end{overpic}
\caption{\textit{The clique representation - the upper three "building blocks" are projected to complete graphs, while the lower bifurcation diagram is projected to the gluing of these building blocks at the corresponding vertices. }}
\label{completed}
\end{figure}

By definition, the last never gives trees and forests, but let us again notice that the 2-star graph is the complete graph on two vertices. Here we will show that two representations are related and dual in some sense (see Theorem \ref{conjugate}). Intuitively, this says that the properties of bifurcation diagrams that are trees are analogous to more complex bifurcation structures. 

\begin{figure}[h]
\centering
\begin{overpic}[width=0.35\textwidth]{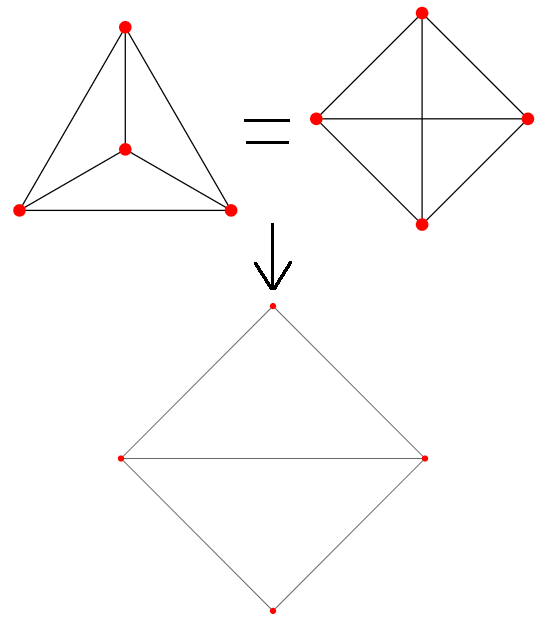}
\end{overpic}
\caption{\textit{The transformation from $K_4$ to the Diamond graph. }}
\label{fig7}
\end{figure}

We first recall several notions. Any connected graph decomposes into a tree of biconnected components (that is, if any one vertex were to be removed, the graph would remain connected). We refer to such components as the \textbf{block-cut tree} of the graph. These block-cut trees are attached to each other at shared vertices called \textbf{cut vertices}, i.e. a cut vertex is any vertex whose removal increases the number of connected components. Finally, a \textbf{block graph} or \textbf{clique tree} is a type of connected graph in which every biconnected component is a clique (that is, a complete graph - see Definition \ref{starcomplete}). Note that block graphs can also be described via a forbidden graph characterization, i.e., as the graphs that do not have the following components:

\begin{itemize}
    \item The diamond graph, that is, the complete graph $K_4$  minus one edge (see the illustration in Figure \ref{fig7}).
    \item A cycle of four or more vertices as an induced subgraph, which is formed from a subset of the vertices of the graph and all of the edges from the original graph, connecting pairs of vertices in the said subset.
\end{itemize}

It is easy to see any tree satisfies the condition above - yet it allows a much more general structure. As with trees, the numbers of connected block graphs on $1, 2, \dots$ nodes are known, and are given in \cite{PL}:

\begin{proposition}
Let $(n_2, n_3, \dots)$ be a sequence of non-negative
integers and $n = \sum\limits_{i \geq 2} n_i(i-1) + 1$. Then the number $Hu(n_2, n_3, \dots)$ of block graphs on $n$ vertices having $n_i$ blocks of size i for each $i$, is given by
$$Hu(n_2, n_3, \dots) = \frac{(n-1)!}{\prod\limits_{j \geq 1} (j!)^{n_{j+1}} n_{j+1}!} n^{k-1},$$
where $k= \sum\limits_{j \geq 2} n_j$ is the total number of blocks.
\end{proposition}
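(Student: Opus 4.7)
The plan is to prove the formula via exponential generating functions and Lagrange inversion, exploiting the recursive decomposition of block graphs at a root vertex.

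First, I would introduce the exponential generating function
\[ H(x; t_2, t_3, \dots) \;=\; \sum_{n \geq 1} \frac{x^n}{n!} \sum_{(n_i)} h_n(n_2, n_3, \dots) \prod_{i \geq 2} t_i^{n_i}, \]
where $h_n(n_2, n_3, \dots)$ denotes the number of \emph{vertex-rooted} block graphs on $\{1,\dots,n\}$ with $n_i$ blocks of size $i$. The key structural observation is that a rooted block graph $(G,v)$ is uniquely described as the vertex $v$ together with an unordered set of blocks meeting at $v$; each such block of size $i$ consists of $v$ plus an unordered collection of $i-1$ further vertices, and each of those vertices is itself canonically the root of a smaller rooted block graph obtained by restricting $G$ to the branch accessible through it. Interpreting the set of blocks at $v$ as an exponential, and the unordered collection of $i-1$ rooted subtrees inside a size-$i$ block as a division by $(i-1)!$, produces the functional equation
\[ H \;=\; x \cdot \exp\!\Bigl(\sum_{i \geq 2} \frac{t_i\, H^{\,i-1}}{(i-1)!}\Bigr). \]

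Next, I would apply Lagrange inversion in the form $[x^n]H = \tfrac{1}{n}[w^{n-1}] \phi(w)^n$, with $\phi(w) = \exp\!\bigl(\sum_{i \geq 2} t_i w^{i-1}/(i-1)!\bigr)$. Expanding $\phi(w)^n$ as a multinomial series in the $t_i$, the coefficient of $w^{n-1} \prod_i t_i^{n_i}$ is nonzero precisely when $\sum_{i \geq 2} n_i(i-1) = n-1$, which reproduces the constraint stated in the proposition, and in that case equals $n^k / \prod_{i \geq 2}((i-1)!)^{n_i} n_i!$, where $k = \sum_{i \geq 2} n_i$. Multiplying by $n!$ to convert the EGF coefficient into a labeled count, and then dividing by $n$ to pass from rooted to unrooted block graphs, yields
\[ \frac{(n-1)!\, n^{k-1}}{\prod_{i \geq 2} ((i-1)!)^{n_i} n_i!}, \]
which under the reindexing $j = i-1$ becomes the asserted formula for $Hu(n_2, n_3, \dots)$.

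The main obstacle is verifying the structural decomposition rigorously at the species level: one must check that distinct blocks meeting at $v$ share only $v$, that each non-root vertex in a block at $v$ is canonically the root of a well-defined smaller block graph, and that the block-size statistics carried by the branches, together with the blocks incident to $v$, partition the data $(n_i)$ correctly. These facts follow from the uniqueness of the block decomposition of a connected graph together with an easy induction on the number of blocks. Once this bijection is in place, the EGF manipulations and the Lagrange inversion step are mechanical.
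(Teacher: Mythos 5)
Your proposal is correct, but note that the paper itself offers no proof of this proposition: it simply quotes the formula from the cited reference \cite{PL} (the classical Husimi--Harary--Uhlenbeck count of labeled block graphs, i.e.\ Husimi trees). So what you have written is a genuine, self-contained derivation where the paper has only a citation. Your argument is the standard modern one and it checks out: the decomposition of a rooted block graph at its root into a set of root-containing cliques, each carrying an unordered $(i-1)$-tuple of branches that are themselves rooted block graphs, gives exactly $H = x\exp\bigl(\sum_{i\ge 2} t_i H^{i-1}/(i-1)!\bigr)$, and Lagrange inversion with $\phi(w)=\exp\bigl(\sum_i t_i w^{i-1}/(i-1)!\bigr)$ yields
\[
[x^n]\,H \;=\; \frac{1}{n}\,[w^{n-1}]\,\phi(w)^n \;=\; \frac{1}{n}\cdot\frac{n^{k}}{\prod_{i\ge 2}((i-1)!)^{n_i}\,n_i!}\prod_i t_i^{n_i}\Big|_{\sum_i (i-1)n_i=n-1},
\]
so that multiplying by $n!$ and dividing by $n$ for the choice of root gives $(n-1)!\,n^{k-1}/\prod_i((i-1)!)^{n_i} n_i!$, which is the stated formula after the shift $j=i-1$. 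The sanity checks confirm it (all $n_i=0$ except $n_2=m$ recovers Cayley's $n^{n-2}$; a single block of size $n$ gives $1$). The only point worth spelling out a little more carefully than you do is the definition of the branch at a non-root vertex $u$ of a block $B$ incident to the root: it is the union of the blocks lying in the component of the block-cut tree containing $u$ after $B$ is removed, which is well defined by the uniqueness of the block decomposition and makes the correspondence bijective. With that said, the proof is complete and arguably an improvement on the paper, which leaves the statement unproved.
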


We further recall that the \textbf{line graph} of a graph $G$ is another graph $L(G)$ that represents the adjacencies between edges of $G$. In detail, the line graph $L(G)$ is constructed in the following way: 
\begin{itemize}
    \item For each edge in $G$, assign a vertex in $L(G)$.
    \item  For every two edges in $G$ that have a vertex in common, there exists an edge connecting their corresponding vertices in $L(G)$. 
\end{itemize}

The most important fact for our purposes on line graphs is the following - for connected graphs with more than four vertices, there is a one-to-one correspondence between the isomorphisms of the graphs and the isomorphisms of their line graphs \cite{Whitney}. Using this fact, we now prove that representations using star graphs and complete graphs are dual in a sense, with which we conclude this Subsection:

\begin{theorem} \label{conjugate} Let $\Gamma$ be a bifurcation diagram which is a tree - then, the line graph of $\Gamma$ is a block graph. Consequently, everything we prove for bifurcation diagrams using the star representation is also true w.r.t. the clique representation.
\end{theorem}
\begin{proof}
The line graphs of trees are exactly the block graphs in which every cut vertex is incident to at most two blocks, or equivalently the claw-free block graphs (i.e., the graphs without an induced subgraph in the form of a three-leaf tree). For the proof, see \cite{Harary}, \cite{Beineke} and \cite{RW}. 
\end{proof}

\section{Appendix - Any graph from $G_{k,d}$ is a part of a bifurcation diagram}
\label{everygraph}

Let $S$ be a closed smooth manifold of dimension $d>1$ and choose some $k>0$. As shown in Section \ref{laws}, in general, the set $G_{k,d}$ is larger than $B_k(S)$, in the sense that the inclusion $B_k(S)\subseteq G_{k,d}$ should be expected to be strict (see the discussion immediately after Definition \ref{graphset}). Here we study the relationship between $G_{k,d}$ and $B_k(S)$ more closely. We begin by proving that despite $G_{k,d}$ being in general larger than $B_k(S)$, the set $G_{k,d}$ is not "too far away" from $B_k(S)$. In detail, we prove:

\begin{theorem}
    \label{existence} Let $\Gamma\in G_{k,d}$ be a connected colored graph (possibly with countably many vertices). Then, there exists a closed smooth manifold $S'$ of dimension $d'\geq d$ and an isotopy $f_t:S'\to S'$, $t\in[0,1]$ of continuous maps with periodic orbits, which bifurcate as dictated by $\Gamma$. More precisely, there exists a graph $\Gamma'$ encoding the bifurcations of some periodic orbits for the isotopy $f_t:S'\to S'$ s.t. $\Gamma\subseteq \Gamma'$.
\end{theorem}
\begin{proof}
  Assume $dim(S)=d>1$. We first prove the theorem under the assumption $\Gamma$ is a tree. To do so, begin by embedding $\Gamma$ as a curve in $\mathbb{R}^d\times[0,1]$. To continue, blow up the (possibly branched) curve $\Gamma$ into a precompact "branched tube" $T$ (not necessarily with the same topological type of $\Gamma$) - in particular, we choose $T$ satisfying the following:
  \begin{enumerate}
      \item $\overline{T}$ is compact in $\mathbb{R}^d\times[0,1]$.
      \item $\partial T$ is a smooth $n$-dimensional manifold.
  \end{enumerate}
  
Now, consider some orientation-preserving isotopy $g_t:T\cap \mathbb{R}^d\times\{t\}\to \mathbb{R}^n$, $t\in(0,1)$ with at least one fixed point in the interior of $T$, $x$, whose bifurcation diagram is $\Gamma$. Such an isotopy exists, since by the Lefschetz Theorem we can always split a fixed point while preserving the Lefschetz number. It is easy to see, the colorings for saddle-node, period-doubling, type $m\geq3$ and $n>3$-junctions preserve the Lefschetz number - therefore, as we can always split the said periodic orbit based on its Lefschetz number, we can ensure the said fixed point $x$ splits as $t$ is varied in a way that its bifurcation diagram is $\Gamma$ (in fact, the Mallet-Yorke Index can also be defined using the Lefschetz number - see \cite{PY3}). Moreover, we choose $g_t$, $t\in[0,1]$ s.t. $g_t$ can always be extended smoothly and diffeomorphically over $\partial T$. Or, in other words, we choose $g_t$ s.t. for all $t\in[0,1]$ it is smooth around $\partial T$ (but possibly not around the periodic orbits bifurcating from $x$).

As $T$ is an open set of $\mathbb{R}^d\times[0,1]$, we can choose $T$ s.t. for all $t\in[0,1]$ the sets $T\cap(\mathbb{R}^d\times\{t\})$ and $g_t(T)\cap(\mathbb{R}^d\times\{t\})$ lie in some fixed chart neighborhoods of $S$. Moreover, setting $g:T\times(0,1)\to T\times(0,1)$ as $g(s,t)=g_t(s)$, $g$ can be extended to a homeomorphism $F:S\times[0,1]\to S\times[0,1]$ due to Theorem $5.5$ in \cite{Pal} and we can also choose that extension to be a diffeomorphism in $(S\times[0,1])\setminus (T\times[0,1])$.

To continue, smoothly deform $F$ isotopically (if necessary) s.t. for all $t$, $F(S\times\{t\})=S\times\{t\}$. This can be done without changing the behavior of $F$ on $T$, as by the definition we have $g(s,t)=g_t(s)\in S\times\{t\}$. Consequently, we have an isotopy of continuous maps $f_t:S\to S$, $t\in[0,1]$ defined by $f_t(s)=F(s,t)$. Moreover, as $f_t$ coincides with $g_t$ on $T\cap S\times\{t\}$, we know that the bifurcation diagram of periodic orbits of the isotopy $g_t:T\cap \mathbb{R}^n\times\{t\}\to \mathbb{R}^n$ is a subdiagram of the bifurcation diagram of $f_t:S\to S$. The proof of the theorem for the case when $\Gamma$ is a tree is now complete.

\begin{figure}[h]
\centering
\begin{overpic}[width=0.35\textwidth]{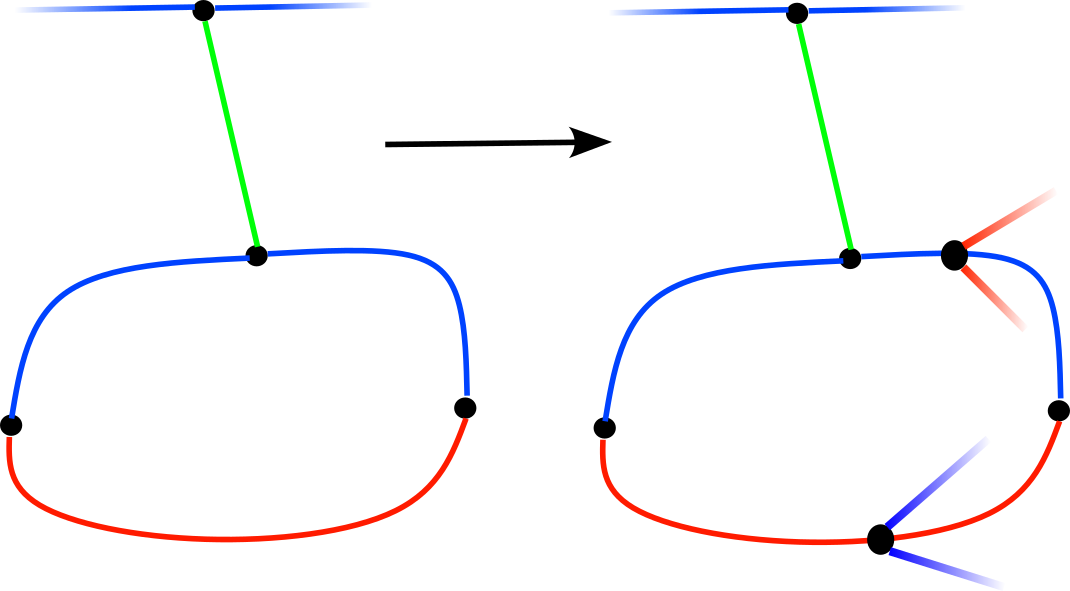}
\end{overpic}
\caption{\textit{Modifying the graph $\Gamma\in G_{1,2}$ which cannot be realized as a bifurcation diagram in $F_1(S^2)$, but it becomes an admissible bifurcation subdiagram in $F_2(S^3)$, hence lies on some subgraph of $G_2(S^3)$. Note that we do so by splitting two edges in two by adding a vertex corresponding to type $m$ bifurcations admissible in $S^3$, see Proposition \ref{rules3}.}}
\label{mod4}
\end{figure}

We now consider the case when $\Gamma$ is not a tree. We first note that whenever we pass through an $n$-junction, a type $m\geq3$ or a period-doubling bifurcation, the Lefschetz number is preserved. This implies the only difficulty can arise in a saddle-node bifurcation, when the diagram dictates two periodic orbits of different minimal period should collide and disappear (as is the case in Figure \ref{nonadmissible}). In this case, there are precisely two options:
\begin{itemize}
    \item \textbf{Case A} - one Orbit has Index $-1$, while the other has Mallet-Yorke Index $1$.
    \item \textbf{Case B} - both Orbits have Index $0$.
\end{itemize}

In both cases, the Lefschetz number associated with each periodic orbit is $-1$, and with the second, $1$. In this case, let us assume one periodic orbit has minimal period $j$ and another has minimal period $k$, and set $m=jkr$, s.t. $r$ is the minimal natural number for which $jkr\geq3$. We now modify $\Gamma$ into $\Gamma'$ by performing a type $m'$-bifurcation on both orbits, thus changing their periods and changing them from $k$ to $j$ (see the illustration in Figure \ref{mod4}) - which allows us to collide these orbits in a saddle-node bifurcation. Now, let us choose $d'\geq d$ as the minimal dimension which allows these bifurcations on $\Gamma'$ to take place - by the coloring of type $m$ bifurcations in dimensions $d\geq4$ given by Proposition \ref{rules4}, such a $d'$ exists. We now repeat the argument above with $\Gamma'$ instead of $\Gamma$, for some smooth, closed manifold $S'$ of dimension $d'$. 
\end{proof}

We now study which cycles cannot exist jointly in both $B_k(S)$ and $G_{k,d}$. We will not give a complete answer, only study a simple example showing the fine details of the difference between the two. To do so, we first remark that given any $d>1$, any $k>0$ and any even $n>0$, if $\Gamma_n$ is a cycle connecting $n$ vertices then we can color it s.t. $\Gamma_n\in G_{k,d}$. To see why it is so, note that $\Gamma_n$ has an even number of edges - therefore, setting each vertex as a saddle-node bifurcation vertex, by Propositions \ref{rules2}-\ref{rules4} we have precisely two options:
\begin{itemize}
    \item Color every edge with Mallet-Yorke Index $0$ (i.e., "green").
    \item Color half the edges with Mallet-Yorke Index $-1$ (i.e., "red"), and another half by Mallet-Yorke Index $1$ (i.e., "blue").
\end{itemize}
It is easy to see that for an even $n$, we also have $\Gamma_n\in B_k(S)$. On the other hand, when $n$ is odd the situation is very different. To illustrate, we prove:
\begin{lemma}
    \label{nocycles} Given an odd $n\in\mathbb{N}$ and any $d>2$, $G_{k,d}$ includes a cycle $\Gamma_n$ with an odd number of vertices - and when $d=2$, no such graph can exist in $G_{k,2}$. In contrast, for every closed smooth manifold $S$ of dimension $d>1$ no colored graph in $B_k(S)$ includes $\Gamma_n$ as a component.
\end{lemma}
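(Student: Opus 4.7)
The plan is to prove the three parts of the lemma in order, using combinatorial coloring arguments for the first two and a geometric argument in $Per$ for the third. In each case the starting observation is that a cycle $\Gamma_n$ has every vertex of degree exactly $2$, so every vertex must correspond to a saddle-node bifurcation (as period-doubling, type $m\geq 3$, and $n$-junction vertices all have strictly higher degree in the diagram).

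For the existence of an odd cycle in $G_{k,d}$ when $d \geq 3$, I would invoke item $(1)$ of Proposition \ref{rules3}: in dimensions $d \geq 3$ a saddle-node in which two periodic orbits of Orbit Index $0$ collide is admissible. Coloring every edge of $\Gamma_n$ green and realizing every vertex as such a $0$--$0$ saddle-node then yields an admissible element of $G_{k,d}$ for every odd $n$.

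For the impossibility in $G_{k,2}$, the key point is that in dimension $2$ the orientation-preserving assumption forces the two multipliers of $D_{f^n_{t_0}}(x)$ to have positive product, so at a saddle-node (where one multiplier equals $1$) the other is automatically positive. A short case split on whether that second multiplier lies in $(0,1)$ or in $(1,\infty)$ shows that the two colliding branches carry indices $\{1,-1\}$ and never $\{0,0\}$, in agreement with Proposition \ref{rules2}. Since every vertex of $\Gamma_n$ is a saddle-node, its two incident edges must be one blue and one red; tracing the cycle forces an alternating blue--red coloring around its $n$ edges, which is impossible for odd $n$.

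The main obstacle is the third claim, which must rule out $\Gamma_n$ as a component of any $B_k(S)$ even when $d \geq 3$, in which case the construction in the first part already supplies an admissible all-green coloring inside $G_{k,d}$. For this I would leave the abstract coloring framework and work directly with the geometry of $Per \subseteq S \times [0,1]$. Each vertex of the component $\Gamma_n$ is a type $1$ bifurcation orbit, at which the two colliding branches exist strictly on one side of the bifurcation time $t_i$; hence $v_i$ is a local extremum of the projection $t$ restricted to the component. On the other hand, every edge of $\Gamma_n$ is a connected branch in $Per \setminus Bif$, which cannot contain an interior turning point in $t$ (any such turning point would itself be a saddle-node and thus a vertex), so $t$ varies monotonically along each edge. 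Traversing the $n$-cycle, the local maxima and local minima of $t$ must therefore alternate, and closing the cycle forces $n$ to be even. This contradicts the assumption that $n$ is odd and completes the proof of the lemma.
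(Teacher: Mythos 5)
Your proposal is correct and, in its first two parts, coincides with the paper's proof: the paper likewise observes that every vertex of a cycle has degree two and hence must be a saddle-node, realizes the odd cycle in $G_{k,d}$ for $d>2$ by the all-green ($0$--$0$) saddle-node coloring of Propositions \ref{rules3} and \ref{rules4}, and rules out odd cycles in $G_{k,2}$ by noting that Proposition \ref{rules2} forces every two-dimensional saddle-node to be a $1$/$-1$ collision, so the edge colors must alternate blue--red around the cycle. The only real divergence is in the third claim. There the paper invokes the Snake Termination Principle of Mallet-Paret and Yorke as a black box: a closed branch of periodic orbits, embedded as a curve $\gamma:S^1\to\mathbb{R}^2$, changes orientation an even number of times, hence carries an even number of saddle-nodes. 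You instead prove the relevant parity statement directly, by observing that each type $1$ vertex is a local extremum of the parameter $t$ on the component of $Per$, that $t$ is monotone along each edge of $Per\setminus Bif$ (a fold in $t$ requires the eigenvalue $1$ and so would be a vertex), and that local maxima and minima of $t$ must alternate around a closed loop. This is essentially the content of the cited principle, so your version is a self-contained, more elementary rendering of the same geometric idea; what it buys is independence from the external reference, at the cost of implicitly relying on the genericity assumptions built into $F_k(S)$ (isolated orbits, folds occurring only at type $1$ points) to justify the monotonicity of $t$ along edges. Both arguments are sound and reach the same conclusion.
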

\begin{proof}
We first consider the case when $d>2$. By Propositions \ref{rules3} and \ref{rules4}, the assumptions above yield that given any cycle with an odd number of vertices we can color it with Mallet-Yorke Index $0$ (see Figure \ref{fig5}) - we can do so as every vertex in $\Gamma_n$ has degree $2$, i.e., it is a saddle-node bifurcation. In other words, this proves $\Gamma_n\in G_{k,d}$. We now show there is no graph in $B_k(S)$ which allows $\Gamma_n$ as a component. To this end, let $f_t:S\to S$ be a $C^1$ one-parameter family in $F_k(S)$, $k>1$, $t\in[0,1]$. Due to the Snake Termination Principle in \cite{PY} we know that if $\Gamma$ is a bifurcation diagram for an orbit which is a cycle, we can smoothly embed it in $\mathbb{R}^2$ as a curve $\gamma:S^1\to \mathbb{R}^2$. By the Snake Termination Principle we also know $\gamma$ changes its orientation an even number of times, which implies it must have an even number of saddle-node bifurcations on it. This shows that $\Gamma_n$ cannot be a component of any graph in $B_k(S)$.

We now study the case when $d=2$. In this case, by Proposition \ref{rules2} we know that any saddle-node bifurcation can only take the form of a periodic orbit of Mallet-Yorke Index $1$ colliding with a periodic orbit of Mallet-Yorke Index $-1$. Or, put simply, in order to have a cyclic graph $\Gamma\in G_{k,d}$ we have to color $\Gamma$ with two colors, say, red and blue. By Proposition \ref{rules2} we know that every vertex on $\Gamma$ has to be the meeting point of a blue and a red edge, being a saddle-node bifurcation. This implies $\Gamma$ must have an even number of red and blue edges, and as it is a cycle, it must have an even number of vertices. This proves that any cyclic graph $\Gamma\in G_{k,d}$ has an even number of vertices and the assertion follows.
\end{proof}

Before concluding this appendix, we remark that characterizing the complete relationship between $B_k(S)$ and $G_{k,d}$ probably cannot be solved using graph-theoretical tools alone. Specifically, in order to do so the topology of $S$ has to be taken into account. The reason we are led to think so is due to Nielsen Theory. Briefly, recall that given a homeomorphism $f:S\to S$ (where $S$ is some manifold, not necessarily closed), Nielsen Theory studies which collections of periodic orbits persist under isotopies, or more generally, homotopies (for a survey of how this theory can be applied to surface dynamics, see \cite{Bo} and \cite{FAS}). Such persistent periodic orbits are called \textbf{unremovable} - and from the theory of two-dimensional dynamical systems we know the unremovability of a periodic orbit w.r.t. a surface isotopy is strongly connected to the topology of $S$ (see \cite{BeH} and \cite{Bo} for the complete details). This teaches us that in order to solve this problem, given a $C^1$ one-parameter family $f_t:S\to S$, $t\in[0,1]$ in $F_k(S)$, we have to find out what unremovable orbits exist for $f_0$ and then study how they can (and cannot) bifurcate as we vary $t$ towards $[0,1]$ (in addition, one would also have to somehow correlate these results with the Mallet-Yorke Index Theory).

\section{Data availability statement}
No data was used for this paper.
\section{Competing interests statement}
The authors state that there is no conflict of interest, and that they did not receive support from any organization for the submitted work. Moreover, the authors have no financial or proprietary interests in any material discussed in this article.
\section{Funding statement}
The authors did not receive support or funding from any organization for the submitted work.
\section{Author contribution statement}
 Both authors made an equal contribution to this study, and both comply with the COPE regulations.
 
\newpage
\printbibliography

@article{KE,
        author = "Kent, P., and Elgin, J.",
        title = "Noose bifurcation of periodic orbits",
          journal = "Nonlinearity",
        volume = "4",
        year = "1991",
        pages = "1045-1061"
}

@article{BeH,
        author = "Betsvina, M. and Handel, M.",
        title = "Train-tracks for surface homeomorphisms",
          journal = "Topology",
        volume = "34 (1)",
        year = "1995"
}

@book{B1,
  author = "Fiedler, B.",
  year = "1988",
  title = "Global Bifurcation of Periodic Solution with Symmetry",
  publisher = "Springer"
}

@article{B2,
        author = "Fiedler, B.",
        title = "Global pathfollowing of homoclinic orbits in two-parameter flows",
          journal = "Report No. 4, Institut für Angewandte Analysis und Stochastik, Berlin",
        year = "1992"
}

@article{New,
        author = "Newhouse, S.",
        title = "Diffeomorphisms with infinitely many sinks",
          journal = "Topology",
        volume = "13",
        year = "1974",
        pages = "9-18"
}

@article{Den2,
        author = "Deng, B.",
        title = "Neural spike renormalization. Part II — Multiversal chaos",
          journal = "Journal of Differential Equations",
        volume = "250 (6)",
        year = "2011",
        pages = "2958-2968"
}

@article{B3,
        author = "Fiedler, B.",
        title = "An index for global Hopf bifurcation in parabolic systems",
          journal = "Journal für die reine und angewandte Mathematik",
        volume = "359",
        year = "1985",
}

@article{ky4,
        author = "Alligood, K., and Mallet-Paret, J., and Yorke, J.",
        title = "Families of Periodic Orbits: local continuability does not imply global continuability",
          journal = "Journal of Differential Geometry",
        volume = "16",
        year = "1981"
}

@article{PY4,
        author = "Alexander, J., and Yorke, J. ",
        title = "On the Continuability of Periodic Orbits of Parametrized Three-Dimensional Differential Equations",
          journal = "Journal of Differential Equations",
        volume = "49",
        year = "1983",
        pages = "171-184"
}

@article{Gallas,
        author = "Gallas, J.",
        title = "Degenerate routes to chaos",
          journal = "Physical Review E",
        volume = "48 (6)",
        year = "1993",
}

@article{Tur,
        author = "Doranehgard, M., and Borazjani, I., and Karimi, N., and Li, L.",
        title = "Ruelle-Takens-Newhouse and degenerate period-doubling routes to chaos in a wavy-channel flow under mixed convection",
          journal = "Physical Review Fluids",
        volume = "9",
        year = "2024",
}

@article{Bo,
        author = "Boyland, P.",
        title = "Topological methods in surface dynamics",
          journal = "Topology and its Applications",
        volume = "58 (3)",
        year = "1994"
}

@article{GuW,
        author = "Guckenheimer, J., and Worfolk, P.",
        title = "Instant chaos",
          journal = "Nonlinearity",
        volume = "5",
        year = "1992",
}

@article{AS,
        author = "Albers, D., and Sprott, J.",
        title = "Routes to chaos in high-dimensional dynamical systems: A qualitative numerical study",
          journal = "Physica D: Nonlinear Phenomena",
        volume = "223 (2)",
        year = "2006",
}

@article{PY,
        author = "Mallet-Paret, J., and Yorke, J.",
        title = "Snakes: Oriented Families of Periodic Orbits, Their Sources, Sinks, and Continuation",
          journal = "Journal of Differential Equations",
        volume = "43",
        year = "1982",
        pages = "419-450"
}

@article{erdos,
        author = "Erdős, P., and Rényi, A.",
        title = "Asymmetric graphs",
          journal = "Acta Mathematica Academiae Scientiarum Hungaricae",
        volume = "14",
        year = "1963",
        pages = "295–315"
}

@article{rado,
        author = "R. Rado",
        title = "Universal graphs and universal functions",
          journal = "Acta Arithmetica",
        volume = "9",
        year = "1964",
        pages = "331–340"
}

@article{logic,
        author = "R. Fagin",
        title = "Probabilities on finite models",
          journal = "The Journal of Symbolic Logic",
        volume = "41",
        year = "1976",
        pages = "50-58"
}

@article{MT,
        author = "Tsujii, M.",
        title = "A measure on the space of smooth mappings and dynamical systems theory",
          journal = "Journal of the Mathematical Society of Japan",
        volume = "44 (3)",
        year = "1992",
        pages = "415-425",
}

@article{AB,
        author = "Aston, P., and Bristow, N.",
        title = "Alternating period-doubling cascades",
          journal = "Nonlinearity",
        volume = "26",
        year = "2013"
}

@article{KY2,
        author = "Alligood, K., and Yorke, J.",
        title = "Families of periodic orbits: Virtual periods and global continuability",
          journal = "Journal of Differential Equations",
        volume = "55",
        year = "1984",
        pages = "59-71"
}

@article{facet,
        author = "Sander, E., and Yorke, J.",
        title = "The many Facets of Chaos",
        journal = "International Journal of Bifurcation and Chaos",
        volume = "25 (4)",
        year = "2015",
        pages = "1-15"
}

@article{san2,
        author = "Joglekar, M., and Sander, E., and Yorke, J.",
        title = "Fixed points indices and period-doubling cascades",
        journal = "Journal of Fixed Point Theory and Applications",
        volume = "8",
        year = "2010",
        pages = "151-176"
}

@article{EY,
        author = "Sander, E., and Yorke, J.",
        title = "Period-doubling cascades galore",
          journal = "Ergodic Theory and Dynamical Systems",
        volume = "31 (4)",
        year = "2011",
        pages = "1249 - 1267",
}

@article{LM,
        author = "Llibre, J., and Martinez, R.",
        title = "Tranversality of the invariant Manifolds associated to the Lyapunov Family of Periodic Orbits near $L_2$ in the Restricted Three--Body Problem",
          journal = "Journal of Differential Equations",
        volume = "58",
        year = "1985",
        pages = "104-156",
}

@inbook{PY2,
    author = "Alligood, K., and Mallet-Paret, J., and Yorke, J.",
    title = {Geometric Dynamics},
    publisher = "Springer Verlag",
    year = "1983",
    chapter = "1 - \textit{An index for the continuation of relatively isolated sets of periodic orbits}",
}

@inbook{FAS,
    author = "Asimov, D., and Franks, J.",
    title = {Geometric Dynamics},
    publisher = "Springer",
    year = "1983",
    chapter = "1 - \textit{Unremovable closed orbits}",
}

@inbook{PY3,
    author = "Chow, S., and Mallet-Paret, J., and Yorke, J.",
    title = {Geometric Dynamics},
    publisher = "Springer Verlag",
    year = "1983",
    chapter = "9 - \textit{A periodic orbit index which is also a bifurcation invariant}",
}

@article{Mey,
        author = "Meyer, K.",
        title = "Generic bifurcation of periodic points",
        journal = "Transactions of the American Mathematical Society",
        volume = "149",
        year = "1970",
        pages = "95-107"
}

@article{mey2,
        author = "Meyer, K.",
        title = "Generic Stability Properties of Periodic Points",
        journal = "Transactions of the American Mathematical Society",
        volume = "154",
        year = "1971",
        pages = "273-277"
}

@article{mey3,
        author = "Duilin, H., and Meiss, J., and Sterling, D.",
        title = "Generic Twistless Bifurcations",
        journal = "Nonlinearity",
        volume = "13",
        year = "1999"
}

@article{Quad,
        author = "Backer, A., and Meiss, J.",
        title = "Elliptic Bubbles in Moser's 4D Quadratic Map: The Quadfurcation",
        journal = "SIAM Journal on Applied Dynamical Systems",
        volume = "19 (1)",
        year = "2020",
        pages = "442-479"
}

@article{Pal,
        author = "Palais, R.",
        title = "Natural Operations on differential forms",
        journal = "Transactions of the American Mathematical Society",
        volume = "92",
        year = "1959",
        pages = "125-141"
}

@article{PL,
        author = "P. Leroux",
        title = "Enumerative problems inspired by Mayer's theory of cluster integrals",
          journal = "The Electronic Journal of Combinatorics",
        volume = "11",
        year = "2004",
        pages = "R32"
}

@book{tree,
  author = "F. Bergeron, and G. Labelle, and P. Leroux",
  year = "1998",
  title = "Combinatorial species and tree-like structures",
  publisher = "Cambridge University Press"
}

@book{lov,
  author = "L. Lovász",
  year = "2012",
  title = "Large Networks and Graph Limits",
  publisher = "American Mathematical Society"
}

@article{Whitney,
        author = "H. Whitney",
        title = "Congruent graphs and the connectivity of graphs",
        journal = "American Journal of Mathematics",
        volume ="54",
        year = "1932",
        pages = "150-168"
}

@article{Beineke,
        author = "L. Beineke",
        title = "Characterizations of derived graphs",
        journal = "Journal of Combinatorial Theory, Series B",
        volume ="9",
        year = "1970",
        pages = "129-135"
}

@book{Harary,
  author = "F. Harary",
  year = "1972",
  title = "Graph Theory",
  publisher = "Massachusetts: Addison-Wesley"
}

@article{RW,
        author = "A. van Rooij, and H. Wilf",
        title = "The interchange graph of a finite graph",
        journal = "Acta Mathematica Hungarica",
        volume ="16",
        year = "1965",
        pages = "263–269"
}

\end{document}